\newtheorem{theorem}{Theorem}
\newtheorem{definition}{Definition}
\newtheorem{lemma}{Lemma}
\newtheorem{corollary}{Corollary}
\newtheorem{remark}{Remark}
\newcommand{\hN}{\mathcal{N}}
\newcommand{\diag}{{\textrm{diag}}}
\newcommand{\cN}{{\cal N}}
\newcommand{\cT}{{\cal T}}
\newcommand{\cE}{{\cal E}}
\newcommand{\cK}{{\cal K}}
\newcommand{\cM}{{\cal M}}
\begin{document}
\title{\LARGE Gradient-Based Multi-Area Distribution System State Estimation}
\author{Xinyang Zhou, Zhiyuan Liu, Yi Guo, Changhong Zhao, Jianqiao Huang, and Lijun Chen
\thanks{X. Zhou is with Power System Engineering Department, National Renewable Energy Laboratory, Golde, CO 80401, USA (email: xinyang.zhou@nrel.gov).}
\thanks{Z. Liu and L. Chen are with Department of Computer Science, University of Colorado, Boulder, CO 80309, USA (emails: \{zhiyuan.liu, \mbox{lijun.chen}\}@colorado.edu).}
\thanks{Y. Guo is with the Department of Mechanical Engineering, The University of Texas at Dallas, Richardson, TX, USA (email:yi.guo2@utdallas.edu).}
\thanks{C. Zhao is with the Department of Information Engineering, The Chinese University of Hong Kong, HK (email: chzhao@ie.cuhk.edu.hk).}
\thanks{J. Huang is with the Department of Electrical and Computer Engineering, Illinois Institute of Technology, Chicago, USA (email: jhuang54@hawk.iit.edu).}
}

\maketitle

\begin{abstract}
The increasing distributed and renewable energy resources and controllable devices in distribution systems make fast distribution system state estimation (DSSE) crucial in system monitoring and control. We consider a large multi-phase distribution system and formulate DSSE as a weighted least squares (WLS) problem. We divide the large distribution system into smaller areas of subtree structure, and by jointly exploring the linearized power flow model and the network topology, we propose a gradient-based multi-area algorithm to exactly and efficiently solve the WLS problem. The proposed algorithm enables distributed and parallel computation of the state estimation problem without compromising any performance. Numerical results on a 4,521-node test feeder show that the designed algorithm features fast convergence and accurate estimation results. Comparison with traditional Gauss-Newton method shows that the proposed method has much better performance in distribution systems with a limited amount of reliable measurement. The real-time implementation of the algorithm tracks time-varying system states with high accuracy. 
\end{abstract}

\begin{keywords}
Distribution system state estimation, multi-area state estimation, distributed algorithm, multi-phase system, large system simulation.
\end{keywords}


\section{Introduction}
Conventional distribution networks are passive systems with predictable loads and generations, and they thus feature with relatively stable system states. In the past decade, however, the penetration level of distributed energy resources (DERs)---a large portion of which are renewable energy resources such as roof-top photovoltaic panels---has been increasing in the distribution systems. The intermittent power injections from the DERs result in rapid changes in system states and impose challenges on distribution system operator (DSO) to monitor and control the system in real time.

State estimation uses limited measurement data to calculate the most likely values of true system states that can determine all other system parameters. Transmission systems state estimation (TSSE) is a well-explored area \cite{gomez2004power}. Traditionally, TSSE is formulated as a weighted least squares (WLS) problem and solved through Gauss-Newton method. However, applying a similar solution method to distribution system state estimation (DSSE) is not straightforward because of the following reasons \cite{della2014electrical,dehghanpour2019survey}:

\noindent 1) Distribution networks may contain thousands or tens of thousands of nodes. Directly implementing the traditional Gauss-Newton methods  for large distribution systems cannot provide fast estimation due to its scalability issues, limiting its application in fast real-time state estimation.

\noindent 2) Distribution networks have high line resistance. Therefore, model simplification for TSSE such as ignoring line resistance \cite{gomez2004power}, becomes inaccurate for DSSE. 

\noindent 3) Unlike transmission systems deployed with redundant reliable measurement devices, usually a limited number of measurement devices are deployed in distribution systems, potentially leading to insufficient observability \cite{bhela2018enhancing}. Even though the observability issue can be mitigated by using pseudo-measurement \cite{primadianto2017review}, the resultant ill-conditioned WLS problem may cause Gauss-Newton method to perform poorly, especially in large systems.

\noindent 4) Different from balanced transmission systems, distribution systems are usually unbalanced and thus requires multi-phase DSSE formulation and algorithms.

\noindent 5) The stability of Gauss-Newton method is known to be sensitive to the initial point and needs additional algorithm design to find good ones \cite{yao2018distribution,zamzam2019data}. Reference \cite{yao2018distribution} also gives examples of Gauss-Newton not converging with a flat start.

These issues motivate us to consider different state estimation methodologies that can better accommodate future distribution systems. In this work, we consider a large radial multi-phase distribution network where voltage magnitudes are volatile because of high penetration level of DERs, and they need to be monitored closely. We choose real and reactive nodal power injections of all nodes as the system states to further estimate voltage magnitudes. Based on limited measuring devices---including (pseudo-)measurement of all load nodes along with a set of nodes with voltage magnitude measurement---we formulate a WLS problem to estimate the system states. Because this WLS problem may be ill-conditioned with huge difference among weights for different measured values, traditional Gauss-Newton method can resultantly generate inaccurate estimation results. Therefore, we propose to solve it with more robust gradient algorithms. Next, we jointly explore the linearized distribution flow (LinDistFlow) model and the system topology, and we equivalently implement the gradient algorithm in a hierarchical manner. Specifically, we divide the distribution network into multiple areas featuring subtree topology, and we assign an area monitoring system (AMS) to each area. We design a multi-area algorithm such that the AMS and DSO communicate and collaborate to exactly solve the original WLS problem. In this way, the data measurement and computation of the original large WLS problem is divided among AMS and DSO, enabling distributed and parallel computation and fast convergence. 

We test the proposed algorithm on the IEEE 37-node system and a three-phase unbalanced 4,521-node test feeder that is based on IEEE 8500-node system and EPRI Ckt7 test system. Simulation results show that the proposed algorithm has much faster convergence and much higher estimation accuracy  than Gauss-Newton method in distribution systems with limited reliable measurement. We then implement the design in a time-varying scenario with one gradient step update at each second according to the temporal granularity of loads and PV generation data. The real-time multi-area DSSE show accurate tracking of the true system states.

\subsection{Related Works}
Traditionally, Gauss-Newton method is applied to solve DSSE \cite{lu1995distribution,wang2004revised}. However, there exist problems adapting this method from transmission systems to distribution systems.  As mentioned, unlike transmission systems with redundant measurement, distribution systems usually have very limited measuring devices. To ensure observability, pseudo-measurement of load nodes power injections with large errors are usually applied. Such big errors may lead to ill-conditioned WLS problem formulation and undesired nonconvergence and inaccuracy results by Gauss-Newton method. Recent work \cite{zamzam2019data} observes non-convergence in DSSE with Gauss-Newton method improves the convergence by finding ``warm starts" through data-driven method. In Section~\ref{sec:compare} we will also report unsatisfying results generated by Gauss-Newton method under realistic scenarios where reliable measurement devices are scarce.

Another problem associated with Gauss-Newton method is its lack of scalability in large systems. As is known, the computational complexity of Gauss-Newton method is between $\mathcal{O}(N^2)$ to $\mathcal{O}(N^3)$ for systems with $N$ nodes, and it significantly increases for large systems.
A promising line of work for efficiently solving large systems state estimation is based on multi-area state estimation (MASE). MASE with Gauss-Newton method has been widely applied for TSSE where it divides large transmission systems into several sub-areas and approximately solves the WLS problem locally \cite{gomez2011multilevel,zhao2005multi,kekatos2012distributed}. For distribution systems, on the other hand, there are fewer studies \cite{primadianto2017review}. References \cite{muscas2015multiarea,pau2017efficient} divide the distribution system based on geographical and topological constraints as well as measurement availability and account correlation among divided areas to improve estimation accuracy. Reference \cite{gomez2012state} divides distribution networks based on feeders and substations. However, these works can only approximately solve the original WLS problem. 
\cite{zhu2014power} designs distributed SDP solver to exactly solve the WLS based on alternative direction method of multipliers (ADMM) but its performance relies on strong assumptions. Equally importantly, these existing multi-area-based DSSE works lack large system tests to illustrate the scalability of their designs. 

Other recent works on DSSE solution methods include references \cite{donti2018matrix,genes2018robust}, which use matrix completion method---a method for estimating missing values in low-rank matrices---to handle low observability issues in distribution systems. \cite{zhao2019power}, as well as the references therein, applies dynamic state estimation based on Kalman filter to monitor power systems. These solution methods, however, may not be  scalable enough to handle large systems efficiently, thus prohibiting real-time implementation for fast-changing distribution system states. 

\subsection{Contributions}
Although most existing works on DSSE are based on Gauss-Newton method---see two recent surveys \cite{primadianto2017review,dehghanpour2019survey}---there is little work on the gradient-based method. Aiming to provide efficient and accurate DSSE for large distribution systems with limited reliable measurements, our proposed work makes significant contributions in the following aspects: 
\begin{enumerate}
\item We propose a robust gradient-based algorithm for solving WLS with potentially large errors associated with pseudo-measurement, and we provide rigorous analytic performance characterization.
\item We design a novel multi-area implementation of the gradient-based algorithm by exploring the tree/subtree topological structure of distribution systems. Such a design enables parallel and distributed computation for large DSSE problems without losing performance compared with the centralized implementation.
\item We compare the numerical performance between the proposed gradient-based method and Gauss-Newton method on IEEE 37-node test feeder and a 4,521-node multi-phase distribution system based on IEEE 8500-node test system and EPRI Ckt7 test system to show that gradient-based method is more scalable in large system and that under the situations of scarce reliable measurement, gradient-based method consistently generates more robust and more accurate estimation results.
\item We further extend the implementation of the proposed method to real-time scenario with load and PV generation data changing every one second and illustrate its accurate online state estimation performance.
\end{enumerate}

\begin{remark}
To incorporate other types of measurement, as well as related problems such as optimal measurement placement, bad data detection, network topology estimation etc., is crucial for better state estimation results. Our ongoing efforts are exploring how to efficiently and accurately involve other types of measurement including voltage phases, branch power flows, and so on into the proposed multi-area state estimation framework. However, despite that we only use voltage magnitudes measurement and (pseudo-)measurement of load nodes, the entire system is fully observable and determined and we have achieved comparable numerical results as those reported by works with other types of measurement, e.g.,\cite{singh2009measurement,wang2004revised}.  
\end{remark}

It is also worthwhile to mention that, DSSE focusing on voltage magnitudes has significant practical meanings, especially considering that most works on optimization and control in distribution systems aim at voltage magnitudes regulation; see, e.g., a recent survey \cite{molzahn2017survey}. Fast and reliable voltage magnitudes estimation results can bridge the gap between the optimization and control designs that require a global view of the entire distribution systems and the reality of distribution systems equipped with limited reliable measurement devices; see, e.g., our recent work \cite{guo2019solving}.

The rest of this paper is organized as follows. Section~\ref{sec:model} models the distribution systems, formulates DSSE as a WLS problem, and provides observability analysis. Section~\ref{sec:hierarchical} proposes a multi-area algorithm to solve the WLS problem based on the gradient algorithm, which is extended to DSSE in multi-phase systems in Section~\ref{sec:multiphase}. Section~\ref{sec:num} provides numerical results and Section~\ref{sec:conclude} concludes this paper. 

\subsection*{Notations}
In this paper, we use bold upper-case letters to represent matrices, e.g., $\mathbf{A}$, italic bold letters to represent vectors, e.g, $\bm{A}$ and $\bm{a}$, and non-bold letters to represent {scalars}, e.g., $A$ and $a$. Superscript $^{\top}$ performs vector or matrix transpose. $|\cdot|$ denotes the cardinality of a set. $[\cdot]_{\Omega}$ makes projection upon set $\Omega$. Operator $\bigtimes$ represents the Cartesian product of sets. $\mathfrak{i}:=\sqrt{-1}$ is used as the imaginary unit.


\section{Distribution System State Estimation}\label{sec:model}

\subsection{WLS Estimator}

Denote by $\bm{z}$ the true system states, $\bm{y}$ the measurement, $\bm{h}(\bm{z})$ the nonlinear relationship between $\bm{z}$ and $\bm{y}$ with measurement error $\bm{\xi}$ following normal distribution $\cN(\bm{0},\mathbf{\Sigma})$ with zero mean and covariance matrix $\mathbf{\Sigma}$:
\begin{eqnarray}
\bm{y}=\bm{h}(\bm{z})+\bm{\xi}.\label{eq:hz}
\end{eqnarray}
Let $\mathbf{W}=\mathbf{\Sigma}^{-1}$ be a weight matrix and the WLS estimator can be formulated as \cite{wu1990power, dehghanpour2019survey, gomez2004power}:
\begin{eqnarray}
&\underset{\bm{z}\in\Omega}{\min}&\frac{1}{2} \big(\bm{y}-\bm{h}(\bm{z})\big)^{\top}\mathbf{W}\big(\bm{y}-\bm{h}(\bm{z})\big),\label{eq:se0}
\end{eqnarray}
where $\Omega$ denotes a \textit{convex} and \textit{compact} set of reasonable range for estimated states.

\subsection{Single-Phase Power Flow Model}
Consider a radial power distribution network denoted by a directed tree graph $\cT=\{\cN \cup \{0\},\cE\}$, with $N+1$ nodes collected in the set $\cN \cup \{0\}$, where $\cN:=\{1, ..., N\}$ and node $0$ is the slack bus, and distribution lines collected in the set $\cE$. For each node $i\in \hN$, denote by $\cE_i \subseteq \cE$ the set of lines on the unique path from node $0$ to node $i$, and let $p_i$ and $q_i$ denote the real and reactive power injected to node $i$, where negative power injection means power consumption and positive power injection means power generation. Let $v_i$ be the squared magnitude of the complex voltage (phasor) at node $i$. For each line $(i, j)\in \cE$, denote by $r_{ij}$ and $x_{ij}$ its resistance and reactance, and  $P_{ij}$ and $Q_{ij}$ the sending-end real and reactive power from node $i$ to node $j$. Let $\ell_{ij}$ denote the squared magnitude of the complex branch current (phasor) from node $i$ to $j$. We adopt the following DistFlow model \cite{baran1989optimala, baran1989optimalb} for the radial distribution network:
\begin{subequations}\label{eq:bfm}
	\begin{eqnarray}
	\hspace{-5mm} P_{ij} \hspace{-2mm} &=&\hspace{-2mm} - p_j +\hspace{-2mm}\sum_{k: (j,k)\in \cE} \hspace{-2mm} P_{jk}+  r_{ij}  \ell_{ij}  \label{p_balance}, \\
	\hspace{-5mm}Q_{ij} \hspace{-2mm}&=&\hspace{-2mm}  -q_j + \hspace{-2mm}\sum_{k: (j,k)\in \cE} \hspace{-2mm} Q_{jk} + x_{ij} \ell_{ij} \label{q_balance},\\
	\hspace{-5mm}v_j \hspace{-2mm}&=&\hspace{-2mm}  v_i - 2 \big(r_{ij} P_{ij} + x_{ij} Q_{ij} \big) + \big(r_{ij}^2+x_{ij}^2\big) \ell_{ij} \label{v_drop},\\[1pt]
	\hspace{-5mm}\ell_{ij}v_i \hspace{-2mm}&=&\hspace{-2mm}   P_{ij}^2 + Q_{ij}^2  \label{currents}.
	\end{eqnarray}
\end{subequations}

\subsubsection{Linearized Power Flow}
Following \cite{baran1989network,farivar2013equilibrium}, we assume that the active and reactive power loss $r_{ij} \ell_{ij}$ and $ x_{ij} \ell_{ij}$, as well as $r^2_{ij} \ell_{ij}$ and $x^2_{ij} \ell_{ij}$, are negligible and can thus be ignored. Indeed, the losses are much smaller than power flows $P_{ij}$ and $Q_{ij}$, typically on the order of $1\%$ \cite{farivar2013equilibrium,DistributionLosses}. Moreover, such modeling discrepancy will be largely mitigated by the feedback information from the nonlinear power flow model to be introduced in Section~\ref{sec:gradient}. Eqs.~\eqref{eq:bfm} thus become the following linear equations:
\begin{subequations}\label{eq:bfm2}
	\begin{eqnarray}
	\hspace{-5mm} P_{ij} \hspace{-2mm} &=&\hspace{-2mm} - p_j +\hspace{-2mm}\sum_{k: (j,k)\in \cE} \hspace{-2mm} P_{jk}  \label{p_balance2}, \\
	\hspace{-5mm}Q_{ij} \hspace{-2mm}&=&\hspace{-2mm}  -q_j + \hspace{-2mm}\sum_{k: (j,k)\in \cE} \hspace{-2mm} Q_{jk}  \label{q_balance2},\\
	\hspace{-5mm}v_j \hspace{-2mm}&=&\hspace{-2mm}  v_i - 2 \big(r_{ij} P_{ij} + x_{ij} Q_{ij} \big)  \label{v_drop2}.
	\end{eqnarray}
\end{subequations}

By \cite{farivar2013equilibrium}, Eqs.~\eqref{eq:bfm2} generate the following linear model for the squared voltage magnitudes and the nodal power injections:
\begin{eqnarray}
\bm{v}&=&\mathbf{R}\bm{p}+\mathbf{X}\bm{q}+\tilde{\bm{v}},\label{eq:lindistflow}
\end{eqnarray}
where bold symbols $\bm{v}=[v_1,\ldots,v_N]^{\top}$, $\bm{p}=[p_1,\ldots,p_N]^{\top}$, $\bm{q}=[q_1,\ldots,q_N]^{\top}\in\mathbb{R}^N$ represent vectors; $\tilde{\bm{v}}$ is a constant vector depending on initial conditions; and the sensitivity matrices $\mathbf{R}$ and $\mathbf{X}$, respectively, consists of elements:
\begin{eqnarray}
R_{ij}:= \!\!\! \sum_{(h,k)\in \cE_i \cap \cE_j}\!\!\!\! 2\cdot r_{hk}, \ \ \ \ X_{ij}:=\!\!\!\! \sum_{(h,k)\in \cE_i \cap \cE_j}\!\!\!\! 2\cdot x_{hk}  \label{X_def}.
\end{eqnarray}

Here, the voltage-to-power-injection sensitivity factors $R_{ij}$ ($X_{ij}$) are obtained through \textit{the resistance (resp. reactance) of the common path of node $\bm{i}$ and $\bm{j}$ leading back to node 0}. Keep in mind that this result serves as the basis for designing the hierarchical distributed algorithm to be introduced later. Fig.~\ref{fig:RX} illustrates $\cE_i \cap \cE_j$ for two arbitrary nodes $i$ and $j$ in a radial network and their corresponding $R_{ij}$ and $X_{ij}$.

\begin{figure}[h]
	\centering
	\includegraphics[width=.33\textwidth]{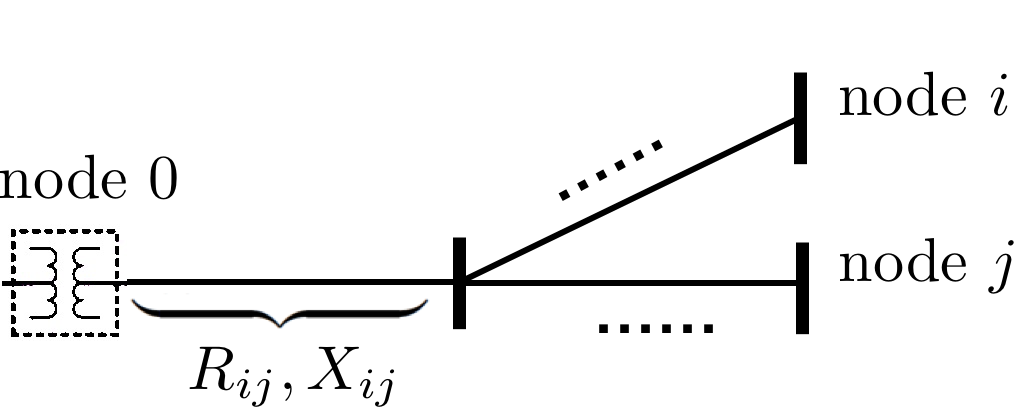}
	\caption{$\cE_i \cap \cE_j$ for  two arbitrary nodes $i, j$ in the network and the corresponding mutual voltage-to-power-injection sensitivity factors $R_{ij}, X_{ij}$. }
	\label{fig:RX}
\end{figure}

\subsection{Distribution System State Estimation Problem}\label{sec:formulation}
In the modeled distribution network, we choose $\bm{z}=[\bm{p}^{\top},\bm{q}^{\top}]^{\top}\in\mathbb{R}^{2N}$ as the system states. Notice that the entire system is uniquely determined by \eqref{eq:bfm} given $\bm{p}$ and $\bm{q}$.

We consider $\cN$ to be the union of two nonoverlapping subsets: the subset of the zero-injection nodes $\cN_0$ and the subset of the remaining load nodes $\cN_L$. Denote by $\cM_p, \cM_q\subset\cN_L$ the subsets of load nodes with (pseudo) measurable quantities of $[\hat{p}_i]^{\top}_{i\in\cM_p}$ and $[\hat{q}_i]^{\top}_{i\in\cM_q}$, and $\cM_v\subset\cN$ the subset of nodes with real-time voltage measurement $[\hat{v}_i]^{\top}_{i\in\cM_v}$. Here, the voltage magnitudes $[\hat{v}_i]^{\top}_{i\in\cM_v}$ are usually measured  by voltage magnitude meters and (phasor measurement units) PMUs with relatively high accuracy, and the power injections $[\hat{p}_i]^{\top}_{i\in\cM_p}$ and $[\hat{q}_i]^{\top}_{i\in\cM_q}$ are usually based on pseudo-measurements from {historical} data or learning techniques with potentially large errors\footnote{While traditional distribution system usually have a limited number of reliable measuring devices, smart measuring devices at the load side are now increasingly available to provide accurate real-time load measurement. Such additional measurement is consistent with the proposed problem formulation and solution method. For example, we can use smaller $\sigma^2_{p_i}$ and $\sigma^2_{q_i}$ for real-time load measurement than those for pseudo-measurement in \eqref{eq:WLS}.}. Let $m=|\cM_p|+|\cM_q|+|\cM_v|$  with $|\cdot|$ denoting the cardinality of a set. 

We further assume that measurement errors are independent. As a result, $\mathbf{\Sigma}$ is a diagonal matrix written as:
\begin{eqnarray}
\mathbf{\Sigma}\hspace{-2mm}&=&\hspace{-2mm}\diag\Big\{\Big[[\sigma^2_{p_i}]^{\top}_{i\in\cM_p},[\sigma^2_{q_i}]^{\top}_{i\in\cM_q},[\sigma^2_{v_i}]^{\top}_{i\in\cM_v}\Big]^{\top}\Big\}\in\mathbb{R}_+^{m\times m},\nonumber
\end{eqnarray}
where $\sigma_{p_i}$, $\sigma_{q_i}$, and $\sigma_{v_i}$ are the standard deviations of measurement errors for $p_i$, $q_i$, and $v_i$, respectively. Then, we recast problem \eqref{eq:se0} based on the linearized power flow as follows:
\begin{subequations}\label{eq:se}
	\begin{eqnarray}
	&\hspace{-7mm}\underset{\bm{p},\bm{q},\bm{v}}{\min}&\hspace{-2mm}\sum_{i\in\cM_p}\hspace{-2mm}\frac{(p_i-\hat{p}_i)^2}{2\sigma_{p_i}^2}+\hspace{-2mm}\sum_{i\in\cM_q}\hspace{-2mm}\frac{(q_i-\hat{q}_i)^2}{2\sigma_{q_i}^2}+\hspace{-2mm}\sum_{i\in\cM_v}\hspace{-2mm}\frac{(v_i-\hat{v}_i)^2}{2\sigma_{v_i}^2},\label{eq:WLS}\\
	&\hspace{-7mm}\text{s.t.}& \hspace{0mm}\bm{v}=\mathbf{R}\bm{p}+\mathbf{X}\bm{q}+\tilde{\bm{v}},\label{eq:voltage}\\
	&\hspace{-7mm}& (\bm{p},\bm{q})\in\Omega.\label{eq:Omega}
	\end{eqnarray}
\end{subequations}
Here, $\Omega={\bigtimes}_{i\in\cN}\Omega_i$ is a convex and compact set with:
\begin{eqnarray}
\Omega_i\hspace{-2mm}&:=&\hspace{-2mm}\big\{(p_i,q_i)\ |\ p^{\text{min}}_i\leq p_i\leq p^{\text{max}}_i,\ q^{\text{min}}_i\leq q_i\leq q^{\text{max}}_i\big\},\nonumber
\end{eqnarray}
providing a reasonable estimation range for node $i$. To better facilitate estimation, such feasible sets should be loose enough to cover load/generation capacity while tight enough to avoid unreasonable/overfitting results. For example, for a load node, the upper bound can be set to twice the normal peak load, and the lower bound can be set to zero. For the zero-injection node $i\in\cN\backslash\cN_L$, we set its $\Omega_i:=\{(0,0)\}$ as a singleton.

\subsection{Observability Analysis}

Following \cite{giannakis2013monitoring,wu1985network}, we define the \textit{observability} of a distribution network as the ability to \emph{uniquely} identify the state $(\bm{p},\bm{q})$. Observability is usually analyzed based on a linearized model \cite{wu1985network}. Therefore, we consider the following linearized equation for \eqref{eq:hz}:
\begin{eqnarray}
\bm{y}=\mathbf{H}\bm{z}+\bm{\xi}.\label{eq:Hz}
\end{eqnarray}
\begin{definition}[100\% Observability]
	The network is said to be fully (100\%) observable if for all $\bm{z}$ such that $\mathbf{H}\bm{z}=\bm{0}$, there must be $\bm{z}=\bm{0}$. Otherwise, any $\bm{z}'$ that satisfies $\bm{z}'\neq \bm{0}$ and $\mathbf{H}\bm{z}'=\bm{0}$ is called an unobservable state. 
\end{definition}

\begin{theorem}\label{the:observe}
	A sufficient condition for the distribution network $\cT$ to be fully (100\%) observable is $\cM_p=\cM_q=\cN_L$.
\end{theorem}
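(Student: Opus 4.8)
The plan is to reduce the claim to a null-space computation on the linearized measurement matrix $\mathbf{H}$ of \eqref{eq:Hz}, and to show this null space is trivial as soon as $\cM_p=\cM_q=\cN_L$. First I would record the block structure of $\mathbf{H}$ induced by the partition $\bm{z}=[\bm{p}^\top,\bm{q}^\top]^\top$. Each direct (pseudo-)measurement of $p_i$, $i\in\cM_p$, contributes a row that selects the single coordinate $p_i$ (a standard basis vector in the $\bm{p}$-block and zero in the $\bm{q}$-block), and symmetrically for each $q_i$, $i\in\cM_q$; each voltage measurement $v_i$, $i\in\cM_v$, contributes the $i$-th row of $[\mathbf{R}\ \mathbf{X}]$ by way of \eqref{eq:voltage}, with the constant offset $\tilde{\bm{v}}$ playing no role in the homogeneous equation $\mathbf{H}\bm{z}=\bm{0}$. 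Finally, the zero-injection nodes $\cN_0=\cN\setminus\cN_L$ enter through $\Omega_i=\{(0,0)\}$, which I treat as perfect information fixing $p_i=q_i=0$ for every $i\in\cN_0$.

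With $\cM_p=\cM_q=\cN_L$ I would then solve $\mathbf{H}\bm{z}=\bm{0}$ coordinate-wise. The $\cM_p$-rows force $p_i=0$ for all $i\in\cN_L$, and the zero-injection information forces $p_i=0$ for all $i\in\cN_0$; since $\cN=\cN_L\cup\cN_0$ is a disjoint union over all $N$ nodes, these together give $\bm{p}=\bm{0}$. The identical argument on the reactive side gives $\bm{q}=\bm{0}$, hence $\bm{z}=\bm{0}$, which is exactly the $100\%$ observability condition. Phrased in matrix terms, stacking the load-node selection rows with the zero-injection rows yields a row permutation of $\mathbf{I}_N$ in each of the $\bm{p}$- and $\bm{q}$-coordinates, so the power-injection part of $\mathbf{H}$ already has full column rank $2N$ and the voltage rows are not even needed for sufficiency.

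Since this is in essence a rank argument, no step is technically deep; the only place that requires care is the treatment of the zero-injection nodes. Because $\cN_0$ is excluded from $\cM_p$ and $\cM_q$, those coordinates are untouched by genuine measurements, and dropping the a priori constraint $\Omega_i=\{(0,0)\}$ would leave them in the null space and defeat observability. The crux is therefore to state explicitly that this zero-injection knowledge acts as a perfect virtual measurement in the observability model, so that the load-node measurements together with the zero-injection constraints jointly cover all $N$ nodes; with that in place the conclusion $\bm{z}=\bm{0}$ is immediate.
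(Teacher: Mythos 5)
Your proof is correct and rests on the same foundation as the paper's: observability is reduced to showing that the linearized measurement matrix $\mathbf{H}$ of \eqref{eq:Hz} has trivial null space (equivalently, full column rank $2N$), and the decisive structural fact is that each direct (pseudo-)measurement contributes a standard-basis row, so $\cM_p=\cM_q=\cN_L$ yields identity blocks over the load-node coordinates. Where you differ is in economy and in explicitness about the zero-injection nodes. The paper's appendix invests in deriving the full $\mathbf{H}$ from the incidence matrix---expressing the voltage rows as submatrices of $\tilde{\mathbf{B}}^{-\top}\tilde{\mathbf{R}}\tilde{\mathbf{B}}^{-1}$ and $\tilde{\mathbf{B}}^{-\top}\tilde{\mathbf{X}}\tilde{\mathbf{B}}^{-1}$---because it also wants the observability index \eqref{eq:observe} for general, possibly insufficient, measurement sets; it then closes with ``the rank of $\mathbf{H}$ \ldots can thus be directly calculated'' without spelling out why the rank is full under the theorem's hypothesis, and in particular without saying how the coordinates of $\cN\backslash\cN_L$ are pinned down. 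Your proof supplies exactly that missing step: you note that the voltage rows are not needed for sufficiency, and that the zero-injection constraints $\Omega_i=\{(0,0)\}$ must be counted as perfect virtual measurements, since otherwise those coordinates would lie in the null space and the claim would fail. So your argument is leaner (no incidence-matrix machinery) and, on the one point where care is genuinely required, more complete than the paper's sketch; what it gives up is the general-purpose structure of $\mathbf{H}$ that the paper reuses to quantify partial observability when $\cM_p,\cM_q\subsetneq\cN_L$.
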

For a detailed analysis, refer to Appendix~\ref{sec:app_observe}.

\subsubsection{Pseudo-measurement and zero-injection nodes}
By Theorem~\ref{the:observe}, to suffice 100\% observability and for notational simplicity, we expand $\cM_p$ and $\cM_q$ to $\cN_L$ for the rest of this paper. Notice that by setting $\cM_p=\cM_q=\cN_L$, we do not assume new information because pseudo-measurements based on historical data with large $\sigma_{p_i}$ and $\sigma_{q_i}$ can be applied to all $i\in\cN_L$.

\subsubsection{Discussion} Observability is usually an issue in distribution systems due to a limited number of sensors deployed; however, by Theorem~\ref{the:observe}, as long as we have pseudo-measurements for all nodes, we achieve 100\% observability. This is further eased because not all nodes in a distribution system have power injection. Take the test case in Section~\ref{sec:num} as an example: of all 4,512 nodes on the primary side, only 1,335 nodes---less than 30\% of all nodes---have loads, leaving the rest of the nodes' power injections accurately known to be zero. This observation not only makes 100\% observability easier to achieve but also foreshadows the surprisingly high accuracy of DSSE in realistic settings even with very few accurate measurement nodes, as will be shown by the numerical examples in Section~\ref{sec:num}.


\section{Hierarchical Distributed State Estimation}\label{sec:hierarchical}

Traditionally, the Gauss-Newton method is applied to solve \eqref{eq:se} \cite{gomez2004power,gomez2011multilevel}. In large distribution networks, however, Gauss-Newton method may experience the following issues. 1) The calculation of the (approximated) Gauss-Newton step can be computationally heavy. 2) Gauss-Newton method usually requires the problem to be unconstrained, or the initialization is close to the optimal. Here, we have an additional constraint \eqref{eq:Omega} and we do not have any assumption on decent initial points. 3) The accuracy of the Gauss-Newton method may be questionable when there are limited reliable measurements, as will be shown by the numerical results in Section~\ref{sec:num}. To address these issues, we apply the projected gradient algorithm to solve \eqref{eq:se} instead. The gradient-based method will also lead to an equivalent multi-area distributed implementation that simplifies computation and accelerates convergence for large distribution systems while providing robust and accurate estimation results.

\subsection{Gradient Algorithm}\label{sec:gradient}
Problem~\eqref{eq:se} may feature large difference among weights $\sigma^2_{p_i}$, $\sigma^2_{q_i}$, and $\sigma^2_{v_i}$. For example, if we use pseudo-measurement of 50\% standard deviation from its real values for some $p_i$, and 1\% standard deviation from the true value for some $v_i$, we end up with $1/2500$ ratio among their weights. Such ill-conditioned WLS may lead to inaccurate results generated by Gauss-Newton method, especially when matrix inverse is performed. Therefore, we next consider applying a more robust gradient algorithm to solve problem~\eqref{eq:se} instead.

Substitute the equality constraint \eqref{eq:voltage} into the objective function \eqref{eq:WLS}, denote by 
$\nu_j(t)=(v_j(t)-\hat{v}_j)/{\sigma^2_{v_j}},\forall j\in\cM_v$
for notational simplicity, and we have the following iterative projected gradient algorithm to solve \eqref{eq:se}:
\begin{subequations}\label{eq:grad}
\begin{eqnarray}
\hspace{-6mm}p_i(s+1)\hspace{-3mm}&=&\hspace{-3mm}\Big[p_i(s)-\epsilon\Big(\hspace{-2mm}\sum_{j\in\cM_v}\hspace{-1.5mm} R_{ji} \nu_j(s) +\frac{p_i(s)-\hat{p}_i}{\sigma^2_{p_i}}\Big)\Big]_{\Omega_i}\!\!\!,\nonumber\\
&&\hspace{45mm}i\in\cN_L,\label{eq:gradp}\\
\hspace{-6mm}q_i(s+1)\hspace{-3mm}&=&\hspace{-3mm}\Big[q_i(s)-\epsilon\Big(\hspace{-2mm}\sum_{j\in\cM_v}\hspace{-1.5mm}X_{ji}\nu_j(s)+\frac{q_i(s)-\hat{q}_i}{\sigma^2_{q_i}}\Big)\Big]_{\Omega_i}\!\!\!,\nonumber\\
&&\hspace{45mm}i\in\cN_L,\label{eq:gradq}\\
\hspace{-6mm}&&\hspace{-20mm}\{p_i(s+1),q_i(s+1)\}\in\Omega_i, \hspace{23mm} i\in\cN\backslash\cN_L,\\
\hspace{-6mm}\bm{v}(s+1)\hspace{-3mm}&=&\hspace{-3mm}\mathbf{R}\bm{p}(s+1)+\mathbf{X}\bm{q}(s+1)+\tilde{\bm{v}},\label{eq:gradv}
\end{eqnarray}
\end{subequations}
where $s$ is the iteration index, and $[\ ]_{\Omega_i}$ denotes the projection operator upon the feasible set $\Omega_i$.
Recall that, for zero-injection node $i\in\cN\backslash\cN_L$, we have $\Omega_i=\{(0,0)\}$. 

Because the cost function \eqref{eq:WLS} is strongly convex, asymptotic linear convergence to the optimal can be readily shown and omitted here. Interested readers are referred to \cite{boyd2004convex}. 

\begin{remark}[Nonlinear Power Flow]\label{mark:nonlinear}
	Note that the convex WLS problem~\eqref{eq:se} and the pertinent gradient algorithm~\eqref{eq:grad} estimate system states based on the lossless linearized power flow model \eqref{eq:bfm2}, which inevitably causes an additional discrepancy between the estimates and the true states based on the nonlinear power flow model \eqref{eq:bfm}. Such modeling errors can be mitigated with model-based feedback implementation---i.e., we can replace \eqref{eq:gradv} with nonlinear power flow equations---whereas the gradient steps \eqref{eq:gradp}--\eqref{eq:gradq} are still based on the fixed $\mathbf{R}$ and $\mathbf{X}$ matrices. In this way, we are able to achieve fast implementation by avoiding computing the state-based linearization model each iteration, while sacrificing only a small amount of accuracy due to modeling error. We refer to Corollary~\ref{the:feedback} in Section~\ref{sec:app_converge} (as a special case of Theorem~\ref{the:converge}) of Appendix~\ref{sec:app_converge} for the analytical characterization of the feedback-based algorithm. Numerical results in Section~\ref{sec:num} validate that this model-based feedback implementation achieves accurate estimation.
\end{remark}

\subsection{Subtree-Based Multi-Area Structure}
When implementing \eqref{eq:grad}, a DSO with global network structure $\mathbf{R}, \mathbf{X}$ and voltage measurements needs to calculate $\sum_{j\in\cM_v}R_{ji} \nu_j(s)$ and $\sum_{j\in\cM_v}X_{ji} \nu_j(s)$ for all $i\in\cN$ at each iteration $s$. Assume that the number of nodes with voltage measurements are proportional to the size of $\cN$. Then, the computational complexity of \eqref{eq:grad} at each iteration is proportional to $N^2$, rapidly increasing as the size of the distribution system scales up and prohibiting fast state estimation.

To improve computation efficiency and facilitate real-time DSSE, we next propose a hierarchical distributed implementation of \eqref{eq:grad} that can reduce the repetitive computation and distribute the computation burden with a subtree-based multi-area network structure. In doing so, we accelerate the DSSE without compromising any accuracy compared with centralized implementation. For this purpose, we first define subtrees as follows.

\begin{definition}[Subtree]
	A subtree of a tree $\cT$ is a tree consisting of a node in $\cT$, all its descendants in $\cT$, and their connecting lines.
\end{definition}

Next, we divide the distribution network $\cT$ into $K$ nonoverlapping subtrees indexed by $\cT_k=\{\cN_k,\cE_k\}$ with root node $n_k^0$, $k\in\cK=\{1,\ldots,K\}$, based on certain pre-defined criteria (e.g., geographic or administrative structure), and the remaining area. Here, $\cN_k$ of size $N_k$ is the set of nodes in subtree $\cT_k$, and $\cE_k$ contains their connecting lines. We denote the set of the remaining ``unclustered" node by $\cN_0$. Thus, we have  $\cup_{k\in\cK}\cN_k\cup\cN_0=\cN\cup\{0\}$ and $\cN_j \cap \cN_k=\emptyset,\forall j\neq k$.
We assign AMS $k$ that is cognizant of the topology of $\cT_k$ to collect pseudo-measurements within $\cT_k$ and communicate with the DSO to collaboratively execute \eqref{eq:grad}, i.e., to estimate system states for the entire network.  The next Lemma plays a key role in the design that follows in Section~\ref{sec:hier}.

\begin{lemma}[Lemma~3 in \cite{zhou2019accelerated}]\label{lem:commonpath}
	Given any two subtrees $\cT_h$ and $\cT_k$ with their root nodes $n^0_h$ and $n^0_k$, we have $R_{ij}=R_{n^0_h n^0_k},X_{ij}=X_{n^0_h n^0_k}$, for any $i\in\cN_h$ and any $j\in\cN_k$. Similarly, given any unclustered node $i\in\cN_0$ and a subtree $\cT_k$ with its root node $n^0_k$, we have $R_{ij}=R_{i n^0_k},X_{ij}=X_{i n^0_k}$, for any $j\in\cN_k$.
\end{lemma}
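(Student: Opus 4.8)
The plan is to reduce the claim to a purely topological identity about path-edge sets. By definition \eqref{X_def}, $R_{ij}$ (resp. $X_{ij}$) equals the sum of the doubled resistances (resp. reactances) of the edges lying in the common path $\cE_i\cap\cE_j$. Hence both equalities $R_{ij}=R_{n^0_h n^0_k}$ and $X_{ij}=X_{n^0_h n^0_k}$ follow at once once I establish the set identity $\cE_i\cap\cE_j=\cE_{n^0_h}\cap\cE_{n^0_k}$ (and analogously $\cE_i\cap\cE_j=\cE_i\cap\cE_{n^0_k}$ for the unclustered case), since summing identical line parameters over identical edge sets yields identical results.

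First I would record the structural fact that drives the proof: because a subtree consists of its root together with all its descendants, the unique root-to-$j$ path for any $j\in\cN_k$ necessarily passes through $n^0_k$. Consequently $\cE_{n^0_k}\subseteq\cE_j$, and the remaining segment $\cE_j\backslash\cE_{n^0_k}$ (the path from $n^0_k$ down to $j$) lies entirely inside the subtree, i.e.\ $\cE_j\backslash\cE_{n^0_k}\subseteq\cE_k$. The same decomposition holds for $i\in\cN_h$, giving $\cE_{n^0_h}\subseteq\cE_i$ and $\cE_i\backslash\cE_{n^0_h}\subseteq\cE_h$.

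The core step is then a disjointness argument for the cross-terms. Expanding $\cE_i\cap\cE_j$ over the decompositions $\cE_i=\cE_{n^0_h}\cup(\cE_i\backslash\cE_{n^0_h})$ and $\cE_j=\cE_{n^0_k}\cup(\cE_j\backslash\cE_{n^0_k})$ produces $\cE_{n^0_h}\cap\cE_{n^0_k}$ together with three cross-terms, and I would show each cross-term is empty. The term $(\cE_i\backslash\cE_{n^0_h})\cap(\cE_j\backslash\cE_{n^0_k})$ vanishes because its two factors lie in $\cE_h$ and $\cE_k$ respectively, which are disjoint for distinct subtrees. The term $(\cE_i\backslash\cE_{n^0_h})\cap\cE_{n^0_k}$ vanishes because an edge interior to $\cT_h$ cannot lie on the root-to-$n^0_k$ path: otherwise that path would enter $\cT_h$, forcing $n^0_k$ to be a descendant of $n^0_h$ and hence $n^0_k\in\cN_h$, contradicting $\cN_h\cap\cN_k=\emptyset$. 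The symmetric term $\cE_{n^0_h}\cap(\cE_j\backslash\cE_{n^0_k})$ vanishes by the same reasoning with the roles of $h$ and $k$ exchanged. This leaves $\cE_i\cap\cE_j=\cE_{n^0_h}\cap\cE_{n^0_k}$, proving the first part.

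For the unclustered node $i\in\cN_0$ I would run the same computation keeping $\cE_i$ intact: using only $\cE_j=\cE_{n^0_k}\cup(\cE_j\backslash\cE_{n^0_k})$ gives $\cE_i\cap\cE_j=(\cE_i\cap\cE_{n^0_k})\cup\big(\cE_i\cap(\cE_j\backslash\cE_{n^0_k})\big)$, and the second term is empty because $i\notin\cN_k$ means the root-to-$i$ path never enters $\cT_k$ and hence shares no edge with $\cE_k\supseteq\cE_j\backslash\cE_{n^0_k}$. I expect the only genuinely delicate point to be making these disjointness claims rigorous, namely the assertion that in a tree a root-path reaching a node outside $\cT_k$ uses no edge interior to $\cT_k$; but this is a direct consequence of the uniqueness of paths in a tree, so no substantive computation is required.
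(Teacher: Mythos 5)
Your proof is correct. Note that the paper itself contains no proof of this lemma: it is imported verbatim as Lemma~3 of \cite{zhou2019accelerated}, and the paper only supports it with the illustration around Fig.~\ref{fig:testfeeder37} (any node in Area~2 and any node in Area~3 share the common path 1-2-4 back to the substation). Your argument supplies exactly the rigor behind that picture, and it is the natural one: reduce both equalities to the edge-set identity $\cE_i\cap\cE_j=\cE_{n^0_h}\cap\cE_{n^0_k}$ via the definition \eqref{X_def}, decompose $\cE_i=\cE_{n^0_h}\cup(\cE_i\backslash\cE_{n^0_h})$ and $\cE_j=\cE_{n^0_k}\cup(\cE_j\backslash\cE_{n^0_k})$, and eliminate the three cross-terms. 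The two facts you isolate do all the work and are both sound: (i) $\cE_j\backslash\cE_{n^0_k}\subseteq\cE_k$ because a subtree contains \emph{all} descendants of its root, so the segment below $n^0_k$ stays inside $\cT_k$; and (ii) if an edge interior to $\cT_h$ lay on the root-path of $n^0_k$ (or of an unclustered node $i$), then that node would be a descendant of the edge's lower endpoint and hence a member of $\cN_h$, contradicting the disjointness $\cN_h\cap\cN_k=\emptyset$ (resp. $\cN_0\cap\cN_k=\emptyset$). Your treatment of the unclustered case by keeping $\cE_i$ intact and discarding only $\cE_i\cap(\cE_j\backslash\cE_{n^0_k})$ is likewise complete, so nothing is missing.
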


\begin{figure}[h]
	\centering
	\includegraphics[scale=0.3]{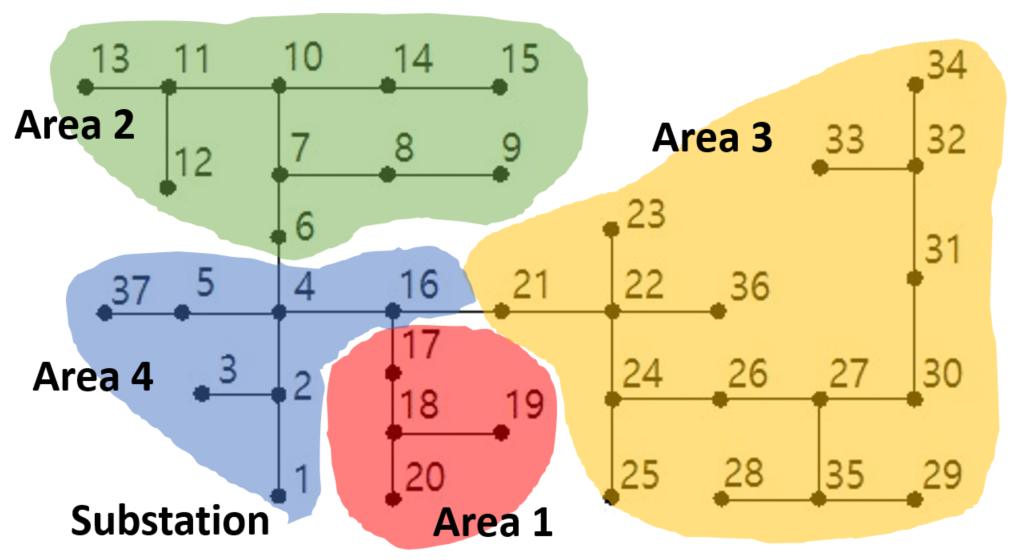}
	\caption{Illustration of the subtree-featuring multi-area structure based on IEEE 37-node test feeder. Areas 1--3 have subtree topology and Area~4 is the remaining area.}\label{fig:testfeeder37}
\end{figure}

Lemma~\ref{lem:commonpath} is illustrated in Fig.~\ref{fig:testfeeder37}, where any node $i$ in subtree Area~2 and any node $j$ in subtree Area~3 share the identical common path leading back to the substation, i.e., lines 1-2-4. Therefore, $R_{ij}=r_{12}+r_{24}$, and $X_{ij}=x_{12}+x_{24}$ for any $i$ within Area~2 and any $j$ within Area~3. Also see Fig.~\ref{fig:testfeeder} for a large system illustration with Areas~1--4 being nonoverlapping subtrees. As an example, any node in Area~1 and any node in Area~3 (both marked by blue triangles) share the same common path (marked by the yellow dashed line) leading back to the substation, i.e., they share the same voltage-to-power-injection sensitivity factor.

\subsection{Hierarchical Distributed Algorithm}\label{sec:hier}

Based on Lemma~\ref{lem:commonpath}, we can decompose the coupling terms (ignoring iteration index) in \eqref{eq:gradp} for $i\in\cN$ as follows:
\begin{eqnarray}
\sum_{j\in\mathcal{M}_v}R_{ji}\nu_j:=\alpha_i\hspace{50mm}\nonumber\\
=\begin{cases}
\underbrace{\underset{j\in\mathcal{M}_v\cap\mathcal{N}_k}{\sum}\hspace{-4mm}R_{ji}\nu_j}_{:=\alpha_{k,i}^{\text{in}}}+\underbrace{\hspace{-3mm}\underset{h\in\mathcal{K},h\neq k}{\sum}\hspace{-3mm}R_{n_h^0 n_k^0}\hspace{-3mm}\underset{j\in\mathcal{M}_v\cap\cN_h}{\sum}\hspace{-3mm}\nu_j+\hspace{-3mm}\underset{j\in\cM_v\cap\cN_0}{\sum}\hspace{-3mm}R_{jn_k^0}\nu_j}_{:=\alpha_k^{\text{out}}},\\
\hspace{12mm}\hspace{35mm}\text{if }i\in\cN_k,\ k\in\cK;\\
\hspace{0mm}\underset{h\in\mathcal{K}}{\sum}R_{n_h^0 i} \hspace{-2mm}\underset{j\in\mathcal{M}_v\cap\cN_h}{\sum}\hspace{-3mm}\nu_j+\hspace{-3mm}\underset{j\in\cM_v\cap\cN_0}{\sum}\hspace{-3mm}R_{ji}\nu_j,\hspace{13mm}\text{if }i\in\cN_0.
\end{cases}\hspace{-5mm}\label{eq:decompose}
\end{eqnarray}
For completeness, we present the results of decomposing $\sum_{j\in\mathcal{M}_v} \hspace{-1mm}X_{ji}\nu_j(t)$ for \eqref{eq:gradq} as:
\begin{eqnarray}
\sum_{j\in\mathcal{M}_v}X_{ji}\nu_j:=\beta_i\hspace{50mm}\nonumber\\
=\begin{cases}
\underbrace{\underset{j\in\mathcal{M}_v\cap\mathcal{N}_k}{\sum}\hspace{-4mm}X_{ji}\nu_j}_{:=\beta_{k,i}^{\text{in}}}+\underbrace{\hspace{-3mm}\underset{h\in\mathcal{K},h\neq k}{\sum}\hspace{-3mm}X_{n_h^0 n_k^0}\hspace{-3mm}\underset{j\in\mathcal{M}_v\cap\cN_h}{\sum}\hspace{-3mm}\nu_j+\hspace{-3mm}\underset{j\in\cM_v\cap\cN_0}{\sum}\hspace{-3mm}X_{jn_k^0}\nu_j}_{:=\beta_k^{\text{out}}},\\
\hspace{12mm}\hspace{35mm}\text{if }i\in\cN_k,\ k\in\cK;\\
\hspace{0mm}\underset{h\in\mathcal{K}}{\sum}X_{n_k^0 i}\hspace{-2mm}\underset{j\in\mathcal{M}_v\cap\cN_h}{\sum}\hspace{-3mm}\nu_j+\hspace{-3mm}\underset{j\in\cM_v\cap\cN_0}{\sum}\hspace{-3mm}X_{ji}\nu_j,\hspace{13mm}\text{if }i\in\cN_0.
\end{cases}\hspace{-5mm}\label{eq:decompose2}
\end{eqnarray}
Based on \eqref{eq:decompose}, we can design a hierarchical distributed implementation of \eqref{eq:grad}, put as Algorithm~\ref{alg:distalg}. 

\begin{algorithm}[h]
	\caption{Gradient-Based Multi-Area DSSE} 
	\begin{algorithmic}\label{alg:distalg}	
		\REPEAT		
		\STATE 1) 
		AMS $k\in\cK$ calculates and sends $\hspace{-4mm}\underset{i\in\cM_v\cap\cN_k}{\sum}\hspace{-4mm}\nu_i(s)$ to DSO; DSO updates $\nu_i(s)$ for unclustered node $i\in\cN_0$.
		
		\STATE 2) DSO computes $\alpha_k^{\text{out}}(s)$ and $	\beta_k^{\text{out}}(s)$ for $k\in\cK$, and $\alpha_i(s)$ and $\beta_i(s)$ for $i\in\cN_0$ by \eqref{eq:decompose}--\eqref{eq:decompose2}, and sends $(\alpha_k^{\text{out}}(s),\beta_k^{\text{out}}(s))$ to AMS $k\in\cK$.
			
		\STATE 3) AMS $k\in\cK$ calculates $\alpha^{\text{in}}_{k,i}(s)$, $\beta^{\text{in}}_{k,i}(s)$, $\alpha_{i}(s)$, and $\beta_{i}(s)$ for $i\in\cN_k$ by \eqref{eq:decompose}--\eqref{eq:decompose2}.
		
		\STATE 4) AMS and DSO update $(p_i(s+1),q_i(s+1))$ for nodes managed by them by
		\begin{subequations}
		\begin{eqnarray}
		\hspace{-2mm}p_i(s+1)\hspace{-3mm}&=&\hspace{-3mm}\big[p_i(s)-\epsilon\big(\alpha_i(s)+(p_i(s)-\hat{p}_i)/{\sigma^2_{p_i}}\big)\big]_{\Omega_i},\nonumber\\
		\hspace{-2mm}q_i(s+1)\hspace{-3mm}&=&\hspace{-3mm}\big[q_i(s)-\epsilon\big(\beta_i(s)+(q_i(s)-\hat{q}_i)/{\sigma^2_{q_i}}\big)\big]_{\Omega_i},\nonumber
		\end{eqnarray}
		\end{subequations}
		for $i\in\cN_L$ and $\{p_i(s+1),q_i(s+1)\}\in\Omega_i$ for $i\in\cN\backslash\cN_L$.
		
		\STATE 5) AMS send results from 4) to DSO and DSO uses grid simulator to update $\bm{v}(s+1)$ by \eqref{eq:gradv}.
		
		\UNTIL Stopping criterion is met (e.g., $\|\bm{v}(s+1)-\bm{v}(s)\|<\delta$ for some small $\delta>0$).
	\end{algorithmic}
\end{algorithm}

Algorithm~\ref{alg:distalg} presents an equivalent hierarchical distributed implementation of the gradient algorithm~\eqref{eq:grad} among the DSO and several AMS. At iteration $s$, {AMS} $k$ receives two {scalars}, $\alpha_k^{\text{out}}(s)$ and  $\beta_k^{\text{out}}(s)$, to update states vectors $[p_i]_{i\in\cN_k}$ and $[q_i]_{i\in\cN_k}$ within its area and sends back the results for the DSO to update the power flow. Moreover, such implementation also reduces the computational complexity when calculating the coupling terms; see the related discussion in Section~III-B of \cite{zhou2019accelerated}.

\begin{remark}[Asynchronous Updates] In practice, however, Algorithm~\ref{alg:distalg}, like most distributed algorithms, may experience a communication loss, local agents malfunction, etc., that result in asynchronous updates. Generally speaking, if none of the agents permanently loses communication with the rest of the system, Algorithm~\ref{alg:distalg} with asynchronous updates will eventually converge to the same results as the synchronous updates, with potentially longer convergence time. Detailed discussions are beyond the scope of this paper. We refer interested readers to \cite{bertsekas1989parallel, zhou2019online} for the detailed performance characterization of  distributed algorithms with asynchronous updates.
\end{remark}

\subsection{Real-Time State Estimation}
In reality, the system states and measured values could change rapidly from one second to the next. This leads to the following time-varying WLS optimization problem at time $t$:
\begin{subequations}\label{eq:set}
	\begin{eqnarray}
	&\hspace{-9mm}\underset{\bm{p}^t,\bm{q}^t,\bm{v}^t}{\min}&\hspace{-4mm}\sum_{i\in\cN_L}\hspace{-2mm}\frac{(p_i^t-\hat{p}_i^t)^2}{2\sigma_{p_i}^2}+\hspace{-2mm}\sum_{i\in\cN_L}\hspace{-2mm}\frac{(q_i^t-\hat{q}_i^t)^2}{2\sigma_{q_i}^2}+\hspace{-2mm}\sum_{i\in\cM_v}\hspace{-2mm}\frac{(v_i^t-\hat{v}_i^t)^2}{2\sigma_{v_i}^2},\label{eq:WLSt}\\
	&\hspace{-9mm}\text{s.t.}& \hspace{-2mm}\bm{v}^t=\mathbf{R}\bm{p}^t+\mathbf{X}\bm{q}^t+\tilde{\bm{v}},\label{eq:voltaget}\\
	&\hspace{-9mm}&\hspace{-2mm} (\bm{p}^t,\bm{q}^t)\in\Omega^t,\label{eq:Omegat}
	\end{eqnarray}
\end{subequations}
with pseudo-measurement $\hat{p}_i^t$, $\hat{q}_i^t$, and $\hat{v}_i^t$, and $\Omega^t$ potentially updated at time $t$.

To solve the time-varying WLS problem, we can implement Algorithm~\ref{alg:distalg} at each $t$. In a large network with fast updating measurements, however, solving \eqref{eq:set} for each $t$ is stressful for  computational resources. A computationally tractable alternative is to execute a limited number of gradient steps from Algorithm~\ref{alg:distalg} at each $t$. Particularly, if the duration from $t$ to $t+1$ is so short that only one gradient step may be implemented as follows:
\begin{subequations}\label{eq:gradt}
	\begin{eqnarray}
	\hspace{-5mm}p_i^{t+1}\hspace{-3mm}&=&\hspace{-3mm}\Big[p_i^t-\epsilon\Big(\hspace{-2mm}\sum_{j\in\cM_v}\hspace{-1.5mm} R_{ji} \nu_j^t +\frac{p_i^t-\hat{p}_i^t}{\sigma^2_{p_i}}\Big)\Big]_{\Omega^{t+1}_i},\ i\in\cN_L\label{eq:gradpt}\\
	\hspace{-5mm}q_i^{t+1}\hspace{-3mm}&=&\hspace{-3mm}\Big[q_i^t-\epsilon\Big(\hspace{-2mm}\sum_{j\in\cM_v}\hspace{-1.5mm}X_{ji}\nu_j^t+\frac{q_i^t-\hat{q}^t_i}{\sigma^2_{q_i}}\Big)\Big]_{\Omega^{t+1}_i},\ i\in\cN_L\label{eq:gradqt}\\
	\hspace{-6mm}&&\hspace{-14mm}\{p_i^{t+1},q_i^{t+1}\}\in\Omega_i^{t+1}, \hspace{30mm} i\in\cN\backslash\cN_L,\\
	\hspace{-5mm}\bm{v}^{t+1}\hspace{-3mm}&=&\hspace{-3mm}\mathbf{R}\bm{p}^{t+1}+\mathbf{X}\bm{q}^{t+1}+\tilde{\bm{v}},\label{eq:gradvt}
	\end{eqnarray}
\end{subequations}
with $\nu_j^t:=(v_j^t-\hat{v}^t_j)/{\sigma^2_{v_j}},\ \forall j\in\cM_v$. Asymptotic convergence of \eqref{eq:gradt} toward a bounded ball around the optimal of \eqref{eq:set} at time $t$ can be shown, given that the change of true states from one time to the next is bounded. For better readability, see Theorem~\ref{the:converge} in Appendix~\ref{sec:app_converge} for the detailed convergence analysis of the proposed online algorithm.

The real-time implementation~\eqref{eq:gradt} enables the proposed algorithm to track the fast-changing system states in distribution systems with deep penetration of intermittent DERs without waiting for convergence at each scenario, as will be shown in Section~\ref{sec:num_realtime}.


\section{Multi-Phase Multi-Area State Estimation}\label{sec:multiphase}
This section extends the multi-area DSSE design to multi-phase unbalanced distribution systems.

\subsection{Multi-Phase System Modeling}
Define the imaginary unit $\mathfrak{i}:=\sqrt{-1}$. Let $a,b,c$---we use $a=0$, $b=1$, and $c=2$ when calculating phase difference---denote the three phases, and $\Phi_i$ the set of phase(s) of node $i\in\cN$, e.g., $\Phi_i=\{a,b,c\}$ for a three-phase node $i$, and $\Phi_j=\{b\}$ for a single b-phase node $j$. Also, in a three-phase system, one usually has $\Phi_0=\{a,b,c\}$ at the root node. Define $\cN^{\phi}\subseteq\cN$ as the subset of $\cN$ collecting nodes that have phase $\phi$. Denote by $p_i^{\phi}$, $q_i^{\phi}$, $V_i^{\phi}$ and $v_i^{\phi}$ the real power injection, the reactive power injection, the complex voltage phasor, and the squared voltage magnitude, respectively, of node $i\in\cN$ at phase $\phi\in\Phi_i$. 
Denote by $N_{\Xi}:=\sum_{i\in\cN}|\Phi_i|=\sum_{\phi\in\Phi_0}|\cN^{\phi}|$ the total cardinality of the multi-phase system, where $|\cdot|$ calculates the cardinality of a set.

Let $z^{\varphi\phi}_{\zeta\xi}\in\mathbb{C}$ be the complex (mutual) impedance of line $(\zeta,\xi)\in\cE$ between phase $\phi$ and $\varphi$. For example, for a three-phase line $(\zeta,\xi)\in\cE$:
	\begin{eqnarray}
	z_{\zeta\xi}=\begin{bmatrix}z^{aa}_{\zeta\xi}& z^{ab}_{\zeta\xi}&z^{ac}_{\zeta\xi}\\ z^{ba}_{\zeta\xi}& z^{bb}_{\zeta\xi}&z^{bc}_{\zeta\xi}\\z^{ca}_{\zeta\xi}& z^{cb}_{\zeta\xi}&z^{cc}_{\zeta\xi}\end{bmatrix}\in\mathbb{C}^{3\times 3}.\nonumber
	\end{eqnarray}
We construct 
$Z^{\varphi\phi}_{ij}=\underset{(\zeta,\xi)\in\cE_i \cap \cE_j}{\hspace{-4mm}\sum}\hspace{-4mm}z^{\varphi\phi}_{\zeta\xi}\hspace{2mm}\in\mathbb{C}$
as the aggregate impedance (if $\varphi=\phi$) or mutual impedance (if $\varphi\neq\phi$) of the common path of node $i$ and $j$ leading back to node 0, and $\overline{Z}^{\varphi\phi}_{ij}$ its conjugate. 

We denote by $\bm{v}_{_{\Xi}}=[[v_1^{\phi}]^{\top}_{\phi\in\Phi_1}, \ldots, [v_{N}^{\phi}]^{\top}_{\phi\in\Phi_N}]^{\top}\in\mathbb{R}^{N_{\Xi}}$ the multi-phase squared voltage magnitude vector, and $\bm{p}_{_{\Xi}}=[[p_1^{\phi}]_{\phi\in\Phi_1}^{\top}, \ldots, [p_{N}^{\phi}]^{\top}_{\phi\in\Phi_N}]^{\top}\in\mathbb{R}^{N_{\Xi}}$ and $\bm{q}_{_{\Xi}}=[[q_1^{\phi}]^{\top}_{\phi\in\Phi_1}, \ldots, [q_{N}^{\phi}]^{\top}_{\phi\in\Phi_N}]^{\top}\in\mathbb{R}^{N_{\Xi}}$ the multi-phase power injection vectors. We then extend the linearization \eqref{eq:lindistflow} to its multi-phase counterpart, written as:
\begin{eqnarray}
\bm{v}_{_{\Xi}}=\mathbf{R}_{_{\Xi}}\bm{p}_{_{\Xi}}+\mathbf{X}_{_{\Xi}}\bm{q}_{_{\Xi}}+\tilde{\bm{v}}_{_{\Xi}},\label{eq:lindistflow_phi}
\end{eqnarray}
where $\tilde{\bm{v}}_{_{\Xi}}\in\mathbb{R}^{N_{\Xi}}$ is a constant vector depending on the squared voltage magnitudes at all phases of the slack bus, and the voltage-to-power sensitivity matrices $\mathbf{R}_{_{\Xi}}, \mathbf{X}_{_{\Xi}}\in\mathbb{R}^{N_{\Xi}\times N_{\Xi}}$ are determined by the linear approximation method developed for multi-phase system \cite{gan2014convex,gan2016online} comprising elements calculated as follows:
	\begin{eqnarray}\label{eq:RX3}
	\partial_{p_j^{\phi}}v_i^{\varphi} \hspace{-2mm} =  2\mathfrak{Re} \big\{ \overline{Z}^{\varphi\phi}_{ij}  \omega^{\varphi-\phi}\big\},\ \ 
	\partial_{q_j^{\phi}}v_i^{\varphi} \hspace{-2mm}  = \hspace{0mm} -2 \mathfrak{Im} \big\{ \overline{Z}^{\varphi\phi}_{ij}  \omega^{\varphi-\phi}\big\},
	\end{eqnarray}
for any $\varphi\in\Phi_i$, $\phi\in\Phi_j$, $i,j\in\cN$, with  $\omega=e^{-\mathfrak{i} 2\pi/3}$, and $\mathfrak{Re}\{\cdot\}$ and $\mathfrak{Im}\{\cdot\}$ denoting the real and imaginary parts of a complex number. Note that when $\varphi=\phi$, Eqs.~\eqref{eq:RX3} coincide with $R_{ij}$ and $X_{ij}$ in Eqs.~\eqref{X_def} for any nodes $i,j\in\cN$; otherwise, when $\varphi\neq\phi$, Eqs.~\eqref{eq:RX3} calculate the aggregate mutual impedance---rotated by the phase difference $\pm 2\pi/3$---of the common path of nodes $i,j\in\cN$ leading back to node 0.

\begin{remark}[Unbalanced Nonlinear Power Flow]
The linearization \eqref{eq:lindistflow_phi}--\eqref{eq:RX3} are based on the assumptions that 1) the system is lossless and 2) the three phases are nearly balanced, i.e., $2\pi/3$ apart \cite{gan2014convex,gan2016online}, introducing modeling error. Nevertheless, echoing our previous discussion in Remark~\ref{mark:nonlinear}, we will  implement the designed algorithm with feedback voltage updated from unbalanced nonlinear power flow to reduce the modeling error. 
\end{remark}

\subsection{Solving Multi-Phase State Estimation Problem}
We use $\bm{v}_{_{\Xi}}(\bm{p}_{_{\Xi}},\bm{q}_{_{\Xi}})$ to represent Eqs.~\eqref{eq:lindistflow_phi}, and formulate the DSSE problem for the multi-phase system as follows:
\begin{subequations}\label{eq:multiSE}
\begin{eqnarray}
&\underset{\bm{p}_{\Xi},\bm{q}_{\Xi},\bm{v}_{\Xi}}{\min}&\sum_{i\in\cN_L}\sum_{\phi\in\Phi_i}\left(\frac{(p^{\phi}_i-\hat{p}^{\phi}_i)^2}{2\sigma_{p^{\phi}_i}^2}+\frac{(q^{\phi}_i-\hat{q}^{\phi}_i)^2}{2\sigma_{q_i^{\phi}}^2}\right)\nonumber\\[-2pt]
&&\hspace{3mm}+\sum_{i\in\cM_v}\sum_{\phi\in\Phi_i}\frac{(v^{\phi}_i-\hat{v}^{\phi}_i)^2}{2\sigma_{v_i^{\phi}}^2},\\
&\text{s.t.}& \bm{v}_{_{\Xi}}=\mathbf{R}_{_{\Xi}}\bm{p}_{_{\Xi}}+\mathbf{X}_{_{\Xi}}\bm{q}_{_{\Xi}}+\tilde{\bm{v}}_{_{\Xi}},\label{eq:ss}\\
&& (p_i^{\phi},q_i^{\phi})\in\Omega_i^{\phi},\ \phi\in\Phi_i,\ i\in\cN,
\end{eqnarray}
\end{subequations}
where $\Omega_i^{\phi}$ is a convex and compact set presenting the upper and lower bounds for reasonable estimation of  $(p_i^{\phi},q_i^{\phi})$. 

Note that the multi-phase sensitivity matrices $R_{_{\Xi}}$ and $X_{_{\Xi}}$ in Eqs.~\eqref{eq:RX3} have similar structures as their single-phase counterparts $\mathbf{R}$ and $\mathbf{X}$ defined in Eqs.~\eqref{X_def}, i.e., the values of $	\partial_{p_j^{\varphi}}v_i^{\phi}$ and $\partial_{q_j^{\varphi}}v_i^{\phi}$ for any $i,j\in\cN$ depend only on the common path of $i$ and $j$ leading back to node 0, adjusted by their angle difference $\phi-\varphi$. This motivates us to design a similar multi-area implementation for the multi-phase DSSE. Denote by $\nu_j^{\varphi}(t)=(v^{\varphi}_j(t)-\hat{v}^{\varphi}_j)/{\sigma^2_{v^{\varphi}_j}}$ for notational simplicity, and the gradient algorithm for solving \eqref{eq:multiSE} is:
\begin{subequations}\label{eq:gradient3}
\begin{eqnarray}
\hspace{-7mm}p^{\phi}_i(s+1)\hspace{-1mm}&=&\hspace{-1mm}\Big[p^{\phi}_i(s)-\epsilon\Big(\sum_{j\in\cM_v}\sum_{\varphi\in\Phi_j}\partial_{p_i^{\phi}}v_j^{\varphi}\nu_j^{\varphi}(s)\nonumber\\
\hspace{-7mm}&&\hspace{-9mm}+\big(p^{\phi}_i(s)-\hat{p}^{\phi}_i\big)/\sigma^2_{p^{\phi}_i}\Big)\Big]_{\Omega^{\phi}_i},\ \phi\in\Phi_i, i\in\cN_L,\hspace{-10mm}\label{eq:multip}\\
\hspace{-7mm}q^{\phi}_i(s+1)\hspace{-1mm}&=&\hspace{-1mm}\Big[q^{\phi}_i(s)-\epsilon\Big(\sum_{j\in\cM_v}\sum_{\varphi\in\Phi_j}\partial_{q_i^{\phi}}v_j^{\varphi}\nu_j^{\varphi}(s)\nonumber\\
\hspace{-3mm}&&\hspace{-9mm}+\big(q^{\phi}_i(s)-\hat{q}^{\phi}_i\big)/\sigma^2_{q^{\phi}_i}\Big)\Big]_{\Omega^{\phi}_i},\ \phi\in\Phi_i, i\in\cN_L,\hspace{-10mm}\label{eq:multiq}\\
&&\hspace{-23mm}\{p^{\phi}_i(s+1),q^{\phi}_i(s+1)\}\in\Omega^{\phi}_i, \hspace{7mm}\ \phi\in\Phi_i, i\in\cN\backslash\cN_L,\label{eq:multipq}\\
\hspace{-7mm}\bm{v}_{_{\Xi}}(s+1)\hspace{-1mm}&=&\hspace{-1mm}\mathbf{R}_{_{\Xi}}\bm{p}_{_{\Xi}}(s+1)+\mathbf{X}_{_{\Xi}}\bm{q}_{_{\Xi}}(s+1)+\tilde{\bm{v}}.\label{eq:multiflow}
\end{eqnarray}
\end{subequations}

Then we decompose the coupling term (ignoring iteration index) in Eq.~\eqref{eq:multip} as:
\begin{eqnarray}
\hspace{-13mm}&&\sum_{j\in\cM_v}\sum_{\varphi\in\Phi_j}\partial_{p_i^{\phi}}v_j^{\varphi}\nu_j^{\varphi}\nonumber\\
\hspace{-13mm}&=&\hspace{-2mm}
\begin{cases}
2\mathfrak{Re} \Big\{ \underset{\varphi\in\Phi_0}{\sum}\omega^{\varphi-\phi}\Big(\hspace{-1mm}\underset{\substack{j\in\cN^{\varphi}\cap\\\cN_k\cap\cM_v}}{\sum}\hspace{-2mm}  \overline{Z}^{\varphi\phi}_{ji} \nu_j^{\varphi}+\hspace{-2mm}\underset{\substack{n_h^0 \in \cN^\varphi\\h\in\cK,h\neq k }}{\sum}\hspace{-2mm}\overline{Z}^{\varphi\phi}_{n_h^0 n_k^0}\hspace{-2mm}\underset{\substack{j\in\cN^{\varphi}\cap\\\cN_h\cap\cM_v}}{\sum}\hspace{-2mm}\nu_j^{\varphi}\\
\hspace{10mm}+\underset{\substack{j\in\cN^{\varphi}\cap\\\cN_0\cap\cM_v}}{\sum}\hspace{-2mm}\overline{Z}^{\varphi\phi}_{j n_k^0}\nu_j^{\varphi}\Big)\Big\},\hspace{10mm} \text{if}~i\in\cN_k,\ k\in\cK;\vspace{2mm}\\[-4pt]
2\mathfrak{Re} \Big\{ \underset{\varphi\in\Phi_0}{\sum}\omega^{\varphi-\phi}\Big(\hspace{-1mm}\underset{\substack{n_h^0 \in \cN^\varphi\\h\in\cK }}{\sum}\hspace{-3mm}\overline{Z}^{\varphi\phi}_{n_h^0 i} \hspace{-2mm}\underset{\substack{j\in\cN^{\varphi}\cap\\\cN_h\cap\cM_v}}{\sum}\hspace{-2mm}\nu_j^{\varphi}+\hspace{-2mm}\underset{\substack{j\in\cN^{\varphi}\cap\\\cN_0\cap\cM_v}}{\sum}\hspace{-2mm}\overline{Z}^{\varphi\phi}_{j i}\nu_j^{\varphi}\Big)\Big\},\\ \hspace{60mm}\text{if}~i\in\cN_0.
\end{cases}\hspace{-10mm}\label{eq:decompose3}
\end{eqnarray}
We can apply similar approaches to decompose the coupling term in Eq.~\eqref{eq:multiq} for reactive power updates. Like Algorithm~\ref{alg:distalg}, we can implement \eqref{eq:gradient3} in a hierarchical distributed way based on Eqs.~\eqref{eq:decompose3}. The resultant design is also ready for real-time implementation, such as \eqref{eq:gradt}. The convergence analysis for the single-phase system in Appendix~\ref{sec:app_converge} completely applies here.

\begin{figure}[h]
	\centering
	\includegraphics[scale=0.34]{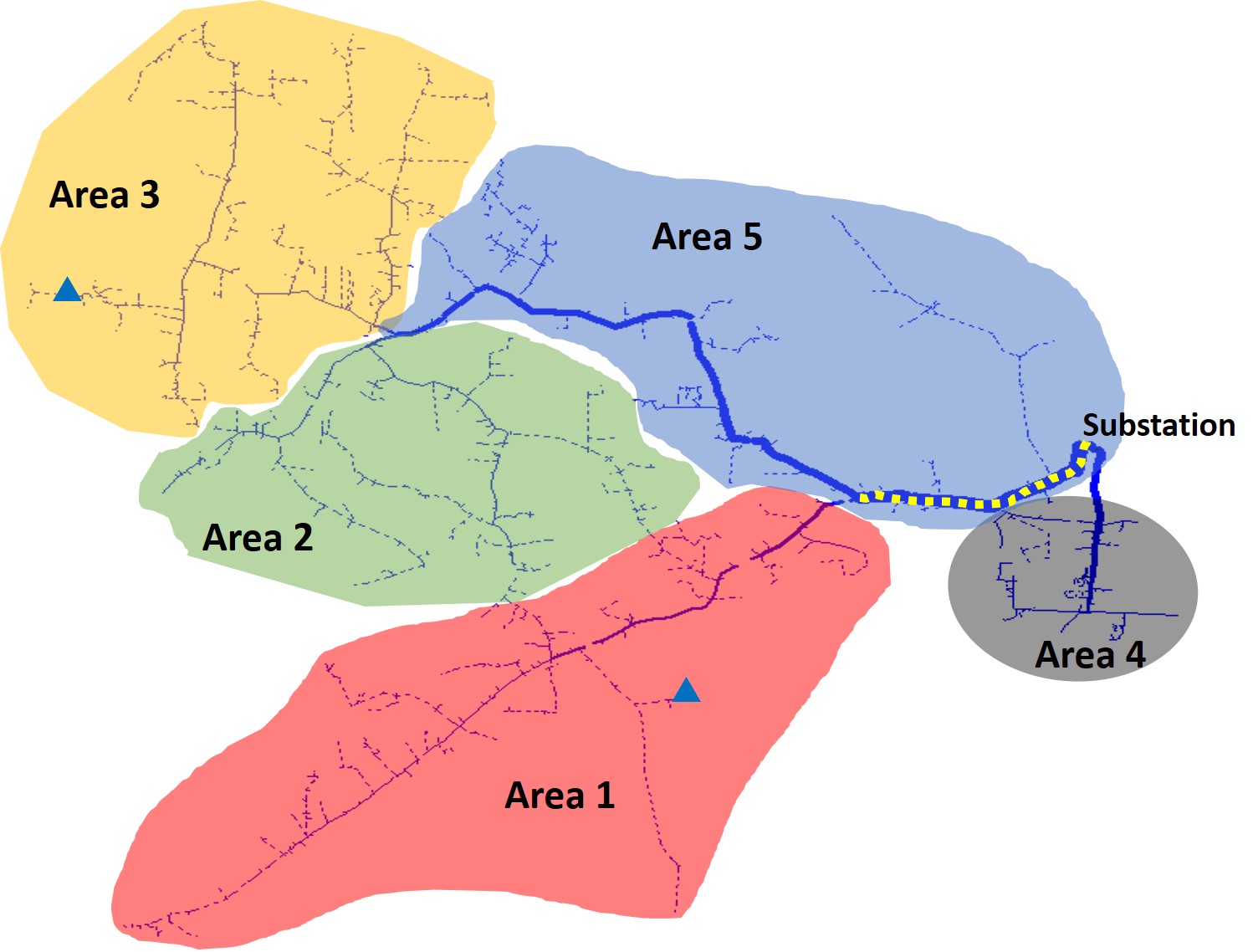}
	\caption{The 11,000-node test feeder constructed from the IEEE 8,500-node test feeder and EPRI Test Circuit Ckt7. Four subtree-based areas, indexed by Area 1--4, together with the remaining area, indexed by Area 5, are formed for our experiments. The yellow dashed line is the common path for Area 1 and Area 3.}\label{fig:testfeeder}
\end{figure}


\section{Numerical Results}\label{sec:num}
\subsection{IEEE 37-Node System Tests}\label{sec:compare}
We first use the {single-phase} IEEE 37-node test feeder (Fig.~\ref{fig:testfeeder37}) to compare the performance between the proposed gradient method with the traditional Gauss-Newton method. Three nodes---6, 12, 34---are selected to measure the voltage magnitudes with 1\% standard deviation from the actual value, and all 36 load nodes have pseudo-measurements with 50\% standard deviation from their true values\footnote{We only use pseudo-measurement for all loads and a small percentage of nodes with voltage magnitudes measurement in simulations to provide conservative performance evaluation, which can be further improved by deploying more advanced measuring techniques and more measuring devices in practice.}. The nonlinear power flow model and the Gauss-Newton state estimation are based on MATPOWER 7.0 \cite{zimmerman2010matpower}. We do not use the multi-area implementation for the gradient method because the network is small. We run this setup 10,000 times under both methods, and we record the results in TABLE~\ref{table:newtonresult}. We also plot the histogram of the average error distribution of the 10,000 cases for both methods in Fig.~\ref{fig:hist} (upper). The error rates are calculated by comparing the estimated results against the ground truth values.

The results show that the gradient algorithm has a slower but comparable computational time in this small network, but it consistently generates much more accurate and robust estimation results with limited reliable measurements. 

{Next, we extend the results to a large test system to demonstrate the scalability, consistency, and accuracy of the proposed algorithm, compared with Gauss-Newton method.}

\begin{table}[t]
	\begin{center}
		\begin{tabular}{|| c | c  | c ||}   	
			\hline
			& Gradient Method & G-N Method\\
			\hline
			Average Time (s) & 0.0249 & 0.0112  \\
			\hline		
			Average Error & 0.4\%  & 0.91\%  \\
			\hline
			Average Max. Error & 0.83\% & 2.19\%  \\
			\hline
		\end{tabular}
		\caption{Performance comparison between gradient method and Gauss-Newton method based on 10,000 tests with random realization of voltage measurement and pseudo-measurement noises on IEEE 37-node test feeder.}\label{table:newtonresult}
	\end{center}
\end{table}

\begin{figure}[h]
	\centering
	\includegraphics[scale=0.35]{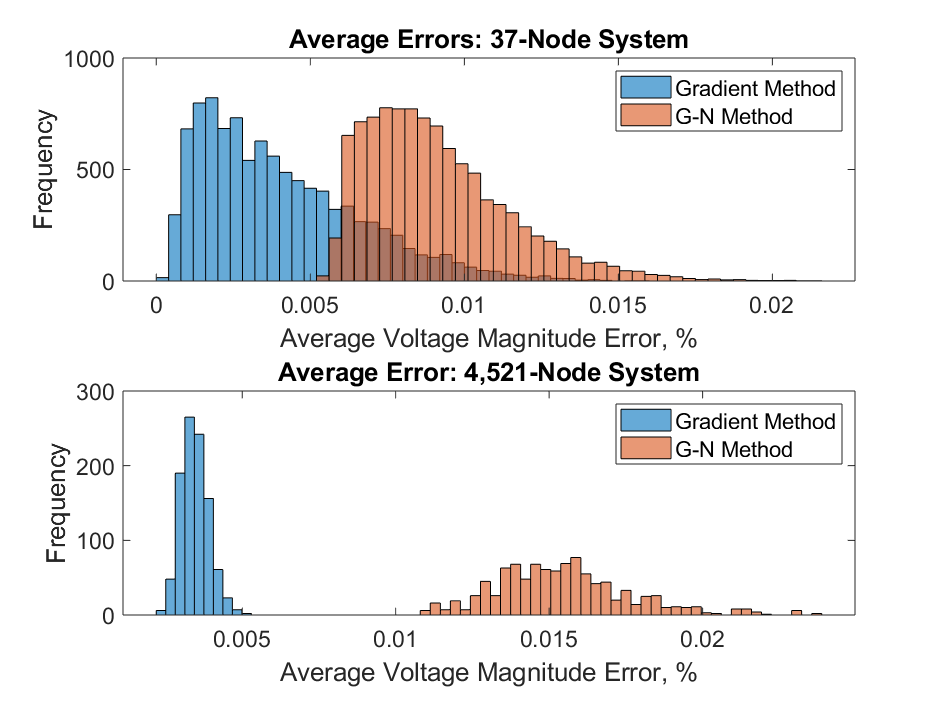}
	\caption{Histograms of the average voltage magnitude estimation errors of the gradient method and the Gauss-Newton method in 37-node system with 10,000 test cases (upper) and in 4,521-node system with 1,000 test cases (lower).}\label{fig:hist}
\end{figure}

\subsection{Large System Tests}\label{sec:num_setup}
\subsubsection{System Setup}
A three-phase, unbalanced, 11,000-node test feeder is constructed by connecting an IEEE 8,500-node test feeder and a modified EPRI Ckt7 at the substation.  Fig.~\ref{fig:testfeeder} shows the single-line diagram of the feeder, where the line width is proportional to the nominal power flow on it. The primary side of the feeder is modeled in detail, whereas the loads on the secondary side (which in this system is the aggregation of several loads) are lumped into corresponding distribution transformers, resulting in a 4,521-node network with 1,043 aggregated load nodes to be estimated. We group all the nodes into four areas marked in Fig.~\ref{fig:testfeeder}: Area 1 contains 357 load nodes, Area 2 contains 222, Area 3 contains 310, and Area 4 contains 154.

We implement the multi-phase multi-area DSSE algorithm based on \eqref{eq:gradient3} with \eqref{eq:multip}--\eqref{eq:multipq} carried out in a hierarchical way and \eqref{eq:multiflow}
replaced with the three-phase unbalanced nonlinear power flow simulated in OpenDSS. The simulation is conducted on a laptop with Intel Core i7-7600U CPU @ 2.80GHz 2.90GHz, 8.00GB RAM, running Python 3.6 on Windows 10 Enterprise Version.

We randomly select 5\% of the nodes within Area 1--4 (i.e., 3.6\% of all 4,521 nodes) to measure their voltage magnitudes. with measurement errors subject to normal distribution of zero mean and 1\% standard deviation from their true values. All 1,043 loads nodes have pseudo-measurement whose errors are subject to normal distribution of zero mean and 50\% standard deviation from their true values.

\begin{figure}
	\centering
	\includegraphics[scale=0.45]{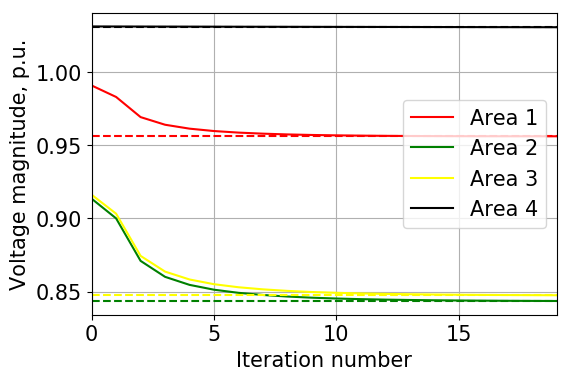}
	\caption{The gradient-based DSSE converges within 20 iterations.}\label{fig:converge}
\end{figure}

\begin{figure}
	\centering
	\includegraphics[scale=0.35]{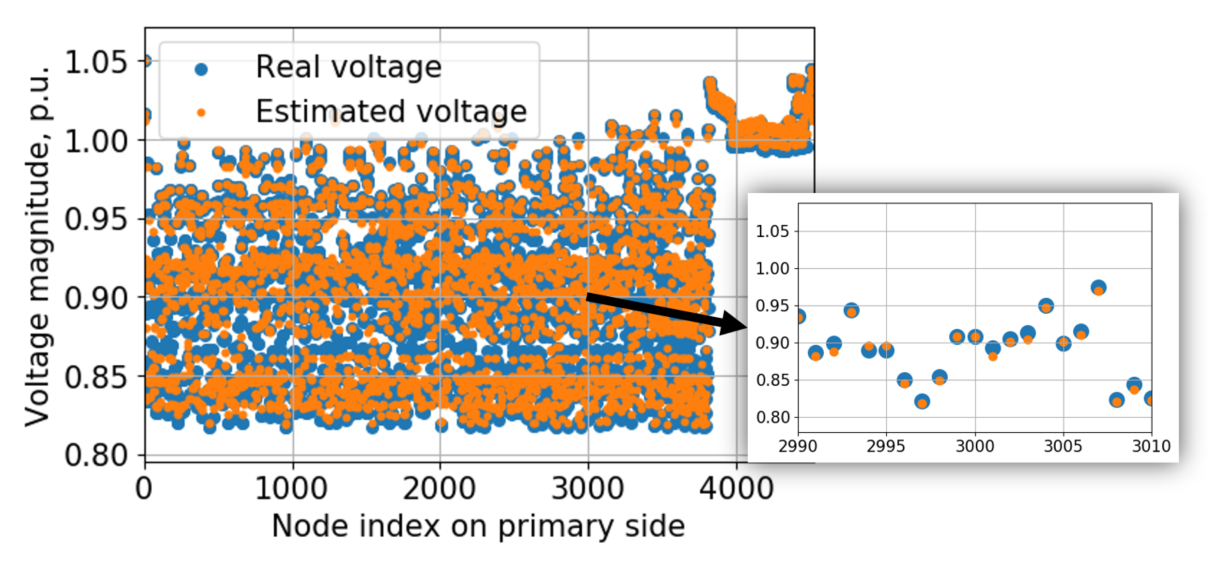}
	\caption{The estimated voltage magnitudes are close to the true values after 20 iterations. We zoom into the range of nodes 2990--3010 to show detailed estimation result.}\label{fig:dot}
\end{figure}

\begin{table}[h]
	\begin{center}
		
		\begin{tabular}{|| c | c  | c ||}   	
			\hline
			& Gradient Method & G-N Method\\
			\hline
			Average Time (s)  & 54.05 & 164.84  \\
			\hline		
			Average Mag. Error & 0.35\%  & 1.57\%  \\
			\hline
			Average Max. Mag. Error & 1.34\% & 6.29\%  \\
			\hline
			Average Ang. Error & 1.75$^\degree$  & 4.15$^\degree$  \\
			\hline
			Average Max. Ang. Error & 5.24$^\degree$ & 16.51$^\degree$  \\
			\hline
		\end{tabular}
		\caption{Performance comparison between the proposed gradient method and Gauss-Newton method based on 1,000 tests with random realization of voltage measurement location and measurement noises on the 4,521-node test feeder.}\label{table:newtonresult2}
	\end{center}
\end{table}

\subsubsection{Convergence and Accuracy}
As shown in Fig.~\ref{fig:converge}, it takes less than 20 iterations for the gradient-based algorithm to converge to close to the true states, which are marked by dashed lines. Fig.~\ref{fig:dot} shows the estimated voltage magnitudes against the true voltage magnitudes after 20 iterations.

We run the simulation 1,000 times with random realization of voltage measurement location selection and pseudo-measurement errors. The average results of the 1,000 cases are presented as follows: 1) the average voltage magnitude estimation error per node is 0.35\%, 2) the maximal nodal voltage estimation error is 1.34\%, and 3) the average time to compute 20 iterations is 54.05 seconds if calculated centrally and 21.31 seconds (approximately 1 second per iteration) if parallel computation among the four areas are implemented. Also note that by design the centrally calculated estimation results are \emph{identical} to the results of the distributed algorithm, but the former takes approximately 2.5 times longer to compute in this network. {We put results in TABLE~\ref{table:newtonresult2}, together with voltage angle estimation results for completeness.}


\subsubsection{Comparison against Gauss-Newton Method}
{We run the Gauss-Newton method based on the same system model and measurement for 1,000 times and record the results as follows: 1) average voltage magnitude estimation error per node is 1.54\%, 2) the maximal nodal voltage estimation error is 6.29\%, and 3) the average time to converge is 367.27 seconds. We summarize the comparison in TABLE~\ref{table:newtonresult2}, and plot the histogram of the average errors comparison in Fig.~\ref{fig:hist} (lower). As we can see, the gradient-based state estimation method illustrates much better scalability, and much higher accuracy than Gauss-Newton method under such scenario of limited measurement resources. The longer computation time for Newton-Gauss method is expected because Newton method is known to be unscalable in large systems; the higher error rates may be caused by the inaccurate matrix inversion operation conducted in Gauss-Newton method for the ill-conditioned WLS formulation in distribution systems with smaller errors for voltage measurement and much larger errors for pseudo-measurement. Angle estimation results also show big difference between the two algorithms. We will explore the reasons and related deeper meanings in our ongoing and future works.}

\begin{figure}[t]
	\centering
	\includegraphics[scale=0.3]{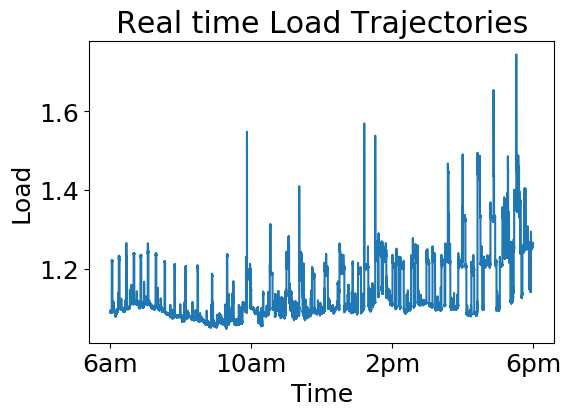}
	\includegraphics[scale=0.3]{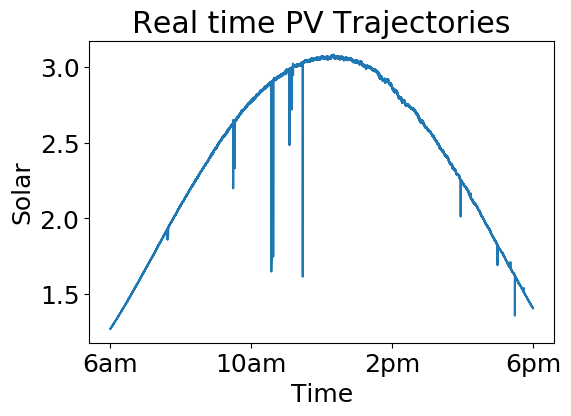}
	\caption{Real-time load (left) and PV generation (right) profile from 6 a.m. to 6 p.m. with 1 second temporal granularity.}\label{fig:load}
\end{figure}

\begin{figure*}
	\centering
	\includegraphics[scale=0.34]{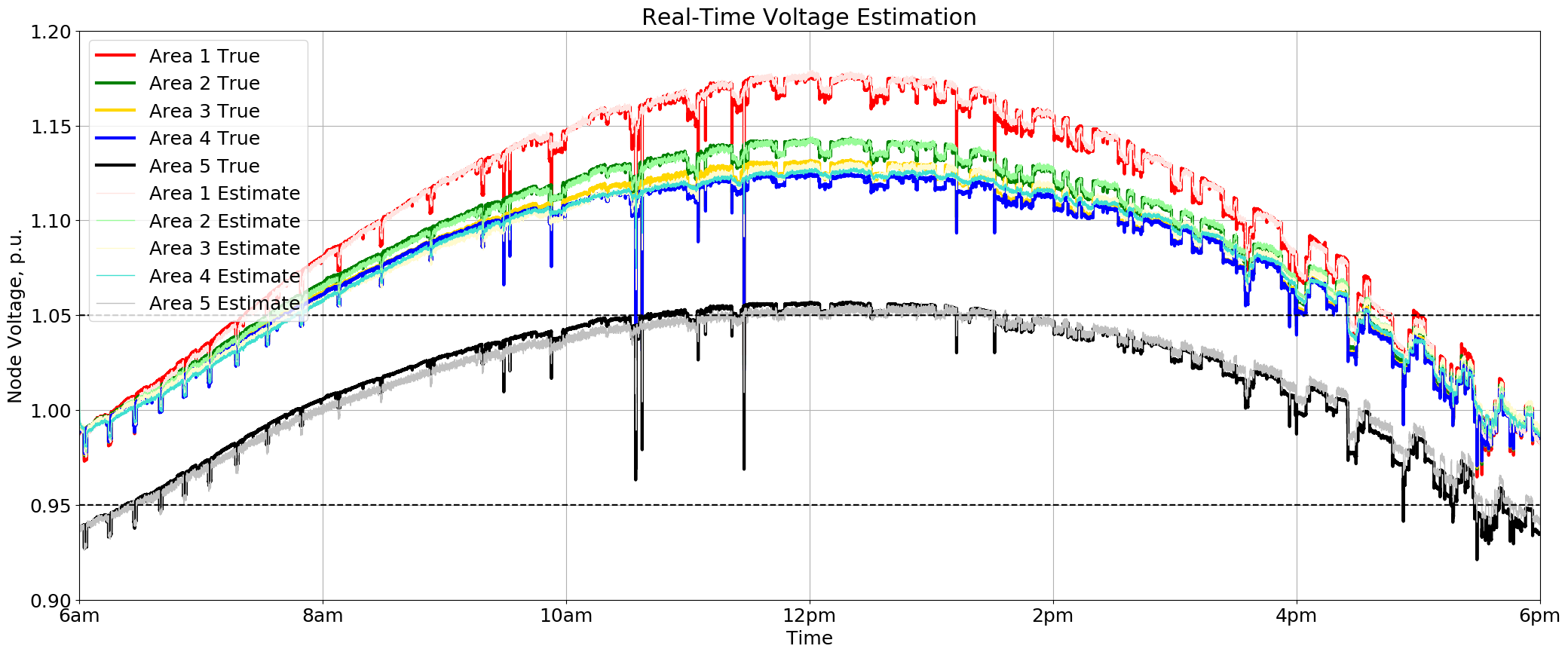}
	\caption{Real-time DSSE from 6 a.m. to 6 p.m. with one gradient step update every second.}\label{fig:realtimese}
\end{figure*}

\begin{figure*}
	\centering
	\includegraphics[scale=0.34]{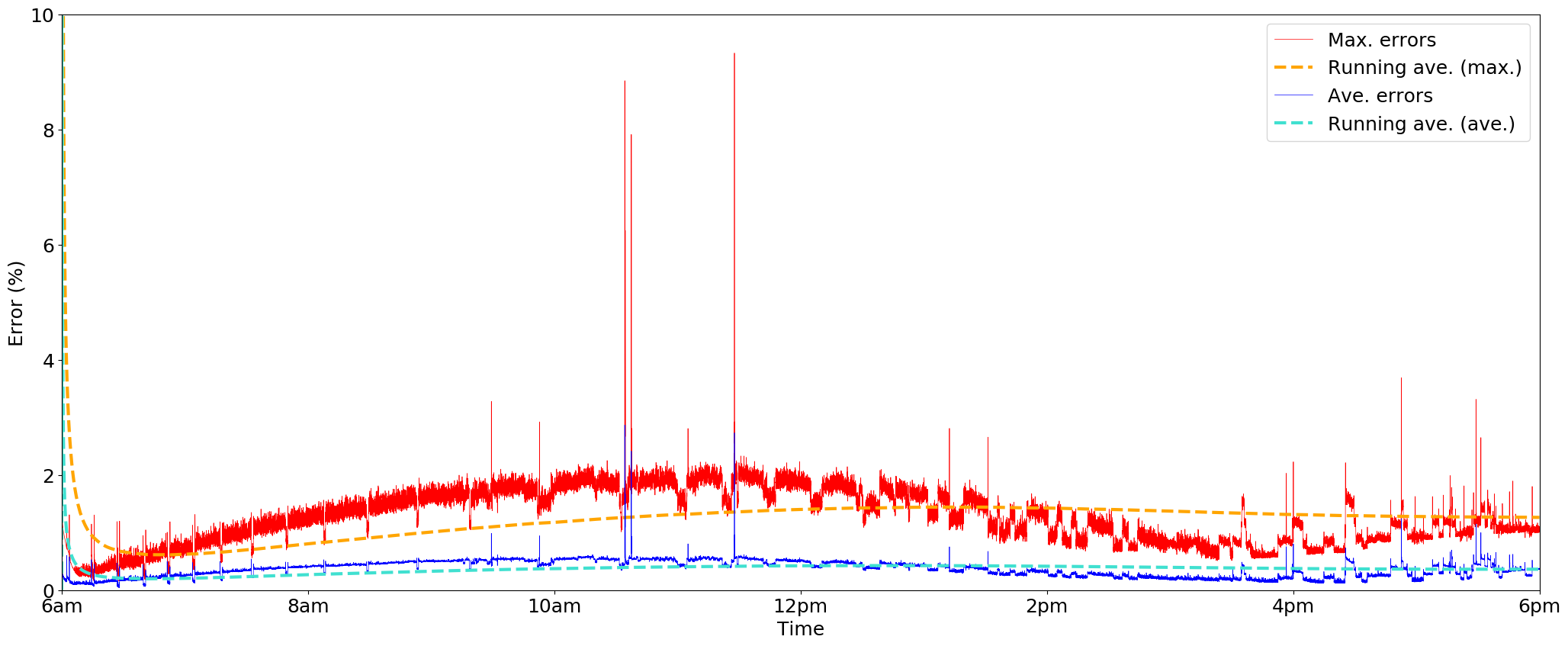}
	\caption{Average errors and maximal errors at every second and their respective running average from 6 a.m. to 6 p.m. }\label{fig:realtimeerr}
\end{figure*}

\subsection{Real-Time State Estimation}\label{sec:num_realtime}
{The one-iteration-per-second computational speed of the proposed multi-area algorithm encourages us to implement the algorithm in a real-time manner.} We use real load and solar irradiance data measured from feeders in Anatolia, California, during a week of
August 2012 \cite{Bank13} with one-second temporal granularity; see Fig.~\ref{fig:load} for the time-varying load and PV generation profile from 6 a.m. to 6 p.m., totaling 43,200 consecutive scenarios used for real-time DSSE. With the same system setup described in Section~\ref{sec:num_setup}, {i.e., real-time pseudo-measurement for all loads with measurement error of 50\% standard deviation, and voltage measurement at 3.6\% of all nodes with measurement error of 1\%,} we implement one gradient step per second to track the fast time-varying voltage magnitudes, and we plot the voltage magnitudes of five arbitrarily chosen nodes from five areas in Fig.~\ref{fig:realtimese}, where the lines of darker colors represent the true voltages, and the lines of lighter colors are the estimated voltages. The real-time estimate is very close to the true values shown in Fig.~\ref{fig:realtimese}. Indeed, the running average of the node average error of all 43,200 cases approaches 0.36\%, and that of the maximal error approaches to 1.26\% after 12 hours, which is consistent with the offline results; see Fig.~\ref{fig:realtimeerr}. 

{Such results demonstrate that the proposed algorithm is capable of fast real-time DSSE with high accuracy in large distribution systems, given the accessibility of high-resolution data of future distribution networks. If the data is available at slower paces, then the proposed algorithm is allowed to execute more iterations until updated data arrive and thus generate better estimation accuracy. On the other hand, the Gauss-Newton method needs more than 10 seconds to calculate one iteration, unable to perform real-state estimation of finer time granularity.}


\section{Conclusions and Future Works}\label{sec:conclude}
We propose a multi-area state estimation algorithm for large distribution networks with rapidly changing system states. The algorithm enables distributed implementation of the gradient algorithm for solving a large WLS problem in a distributed way among AMS and DSO without compromising estimation accuracy. Featured with fast convergence and high accuracy, the algorithm can be implemented in a real-time manner to monitor fast-changing system states. Numerical results based on a large multi-phase distribution system and real-time data are provided to validate the performance of the design. Comparison against traditional Gauss-Newton method shows the major advantages of the proposed algorithm in terms of computation time and estimation accuracy.

In future work, we will continue to explore the gradient-based multi-area DSSE algorithms to incorporate more types of measurements and states to be estimated, e.g., voltage phases, currents, and power branch flows, to further improve the estimation results. 


\section*{Acknowledgments}
We thank Dr. Ahmed Zamzam at National Renewable Energy Laboratory for meaningful discussions and the anonymous reviewers for constructive suggestions to help improve this work.
	
This work was authored in part by the National Renewable Energy Laboratory, operated by Alliance for Sustainable Energy, LLC, for the U.S. Department of Energy (DOE) under Contract No. DE-EE-0007998. Funding provided by U.S. Department of Energy Office of Energy Efficiency and Renewable Energy Solar Energy Technologies Office. The views expressed in the article do not necessarily represent the views of the DOE or the U.S. Government. The U.S. Government retains and the publisher, by accepting the article for publication, acknowledges that the U.S. Government retains a nonexclusive, paid-up, irrevocable, worldwide license to publish or reproduce the published form of this work, or allow others to do so, for U.S. Government purposes.

\bibliographystyle{IEEEtran}
\bibliography{biblio.bib}


\appendices

\section{Observability Analysis}\label{sec:app_observe}
To {analyze} observability, we next calculate $\mathbf{H}$ based on \eqref{eq:bfm2}.
Let $\mathbf{B}\in\mathbb{R}^{(N+1) \times N}$ denote the incidence matrix of the network defined as:
\begin{eqnarray}
\begin{cases}
	B_{ie}=1,& \text{if } e=i\rightarrow j \text{ is a line,}\\
	B_{ie}=-1,& \text{if } e=k\rightarrow i \text{ is a line,}\\
	B_{ie}=0,& \text{otherwise,}
\end{cases}\nonumber
\end{eqnarray}
and let $\tilde{\mathbf{B}}\in\mathbb{R}^{N\times N}$ denote the reduced incidence matrix by removing the first row $\bm{B}_0^{\top}$ from $\mathbf{B}$, i.e.,
$\mathbf{B}=\left[\begin{matrix}\bm{B}_0^{\top}\\ \tilde{\mathbf{B}}\\\end{matrix}\right]$.
It can be derived from linearized power flow \eqref{eq:bfm2} that:
\begin{eqnarray}
\left[\begin{matrix}p_0\\\bm{p}\\\end{matrix}\right]=\mathbf{B}\bm{P},\hspace{3mm} \left[\begin{matrix}q_0\\\bm{q}\\\end{matrix}\right]=\mathbf{B}\bm{Q},\hspace{3mm} \mathbf{B}^{\top}\left[\begin{matrix}v_0\\\bm{v}\\\end{matrix}\right]=\tilde{\mathbf{R}}\bm{P}+\tilde{\mathbf{X}}\bm{Q},\nonumber
\end{eqnarray}
where $\tilde{\mathbf{R}}$ and $\tilde{\mathbf{X}}$ are $N\times N$ diagonal matrices with diagonal terms being $r_{ij}$ and $x_{ij}$, respectively. Note that for a power distribution network with tree topology,  $\tilde{\mathbf{B}}$ is nonsingular. Then we can further obtain that: 
\begin{eqnarray}
\hspace{-8mm}&&p_0=-\bm{\mathit{1}}^{\top}\bm{p},\ \  \bm{P}=\tilde{\mathbf{B}}^{-1}\bm{p},\ \ 
q_0=-\bm{\mathit{1}}^{\top}\bm{q},\ \  \bm{Q}={\tilde{\mathbf{B}}}^{-1}\bm{q},\nonumber\\
\hspace{-8mm}&&\bm{v}={\tilde{\mathbf{B}}}^{-\top}\tilde{\mathbf{R}}{\tilde{\mathbf{B}}}^{-1}\bm{p}+{\tilde{\mathbf{B}}}^{-\top}\tilde{\mathbf{X}}{\tilde{\mathbf{B}}}^{-1}\bm{q}-{\tilde{\mathbf{B}}}^{-\top}\bm{B}_0v_0.\nonumber
\end{eqnarray}
Therefore, with $\bm{z}=[\bm{p}^{\top},\bm{q}^{\top}]^{\top}$ defined as the state vector, the measurement $\bm{y}$ as a subvector of $[\bm{v}^{\top},p_0,q_0,\bm{p}^{\top},\bm{q}^{\top},\bm{P}^{\top},\bm{Q}^{\top}]^{\top}$ can be written as:
$\bm{y}=\mathbf{H}\bm{z}$
where $\mathbf{H}$ is a constant block matrix comprising appropriate submatrices of  
$\tilde{\mathbf{B}}^{-1}$, ${\tilde{\mathbf{B}}}^{-\top}\tilde{\mathbf{R}}{\tilde{\mathbf{B}}}^{-1}$, ${\tilde{\mathbf{B}}}^{-\top}\tilde{\mathbf{X}}{\tilde{\mathbf{B}}}^{-1}$, $-\bm{\mathit{1}}^{\top}$,
as well as appropriate rows of the identity matrix $\mathbf{I}$, depending on the specific measurements available.

According to the definition of observability from \cite{wu1985network,gomez2004power}, the network is 100\% observable if the matrix $\mathbf{H}$ has full column rank. In practice, available measurements are often fewer than what is needed to achieve 100\% observability. In that case, an observability index can be calculated as the ratio between the dimension of the observable state space over the dimension of the entire state space. Note that the dimension of the unobservable state space equals the dimension of the null space of $\mathbf{H}$. Therefore, the dimension of the observable state space equals the rank of $\mathbf{H}$, and thus the network observability is:
\begin{equation}
\text{Observability}=\text{Rank}(\mathbf{H})/{2N}\times 100\%.\label{eq:observe}
\end{equation}
For example, given $\bm{y}=[[\hat{p}_i]^{\top}_{i\in\cM_p}, [\hat{q}_i]^{\top}_{i\in\cM_q}, [\hat{v}_i]^{\top}_{i\in\cM_v}]^{\top}$ we have:
\begin{eqnarray}
\mathbf{H}=\ \ 
\left[\begin{matrix}
{\tilde{\mathbf{I}}}_p&\mathbf{0}\\
\mathbf{0}&{\tilde{\mathbf{I}}}_q\\
\left({\tilde{\mathbf{B}}}^{-\top}\tilde{\mathbf{R}}{\tilde{\mathbf{B}}}^{-1}\right)_v & \left({\tilde{\mathbf{B}}}^{-\top}\tilde{\mathbf{X}}{\tilde{\mathbf{B}}}^{-1}\right)_v\end{matrix}\right]\label{eq:H}
\end{eqnarray}
where 
$({\tilde{\mathbf{B}}}^{-\top}\tilde{\mathbf{R}}{\tilde{\mathbf{B}}}^{-1})_v$ is the submatrix of ${\tilde{\mathbf{B}}}^{-\top}\tilde{\mathbf{R}}{\tilde{\mathbf{B}}}^{-1}$ 
comprising only the $|\cM_v|$ rows corresponding to the nodes that have measurements of squared voltage magnitudes; similarly for 
$\left({\tilde{\mathbf{B}}}^{-\top}\tilde{\mathbf{X}}{\tilde{\mathbf{B}}}^{-1}\right)_v$. ${\tilde{\mathbf{I}}}_p$
and ${\tilde{\mathbf{I}}}_q$ are submatrices of the identity matrix $\mathbf{I}$ comprising only the $|\cM_p|$ and $|\cM_q|$ rows corresponding to the nodes that have pseudo-measurements of active and reactive power injections, respectively. Note that matrix ${\tilde{\mathbf{B}}}^{-\top}\tilde{\mathbf{R}}{\tilde{\mathbf{B}}}^{-1}$ (resp. ${\tilde{\mathbf{B}}}^{-\top}\tilde{\mathbf{X}}{\tilde{\mathbf{B}}}^{-1})$ has the following structure: its $(i,j)$-th element equals $R_{ij}$ (resp. $X_{ij}$), which is the summation of $r_e$ (resp. $x_e$) over all the lines e on the path from the slack bus to the joint node of nodes $i$ and $j$ \cite{zhou2019accelerated}. The rank of $\mathbf{H}$ and the observability of the network can thus be directly calculated.

\section{Convergence Analysis}\label{sec:app_converge}
Denote by $\bm{z}^{t}=[\bm{p}^{t\top},\bm{q}^{t\top}]^{\top}$, by $\bm{z}^{t*}$ the optimal state estimation solution of problem \eqref{eq:set} at time $t$, and by $C^t(\bm{z}^t)$ the cost function in \eqref{eq:WLSt} after substituting \eqref{eq:voltaget} into the cost function. Rewrite the gradient algorithm \eqref{eq:gradt} as the following mapping for convenience:
\begin{eqnarray}
\bm{z}^{t+1} &=& \left[\bm{z}^t-\epsilon \bm{f}^t(\bm{z}^{t})\right]_{\Omega}\label{eq:mapping_l}
\end{eqnarray}
with $\bm{f}^t(\bm{z}^{t})=\nabla_{\bm{z}^t}C^t(\bm{z}^t)$. Denote by $\tilde{\bm{f}}^t(\bm{z}^{t})$ the counterpart of $\bm{f}^t(\bm{z}^{t})$ calculated based on voltage feedback from the nonlinear power flow, i.e., replacing \eqref{eq:gradvt} with \eqref{eq:bfm}. The real-time DSSE with nonlinear power flow feedback can be written in the form of the following dynamics:
\begin{eqnarray}
\bm{z}^{t+1} &=& \left[\bm{z}^t-\epsilon \tilde{\bm{f}}^t(\bm{z}^{t})\right]_{\Omega}.\label{eq:mapping_nl}
\end{eqnarray}

We proceed with the following reasonable assumptions for analytical characterization:
\begin{enumerate}
\item[\textit{A1.}] The difference between the optimal solutions of any two consecutive timeslots is bounded, i.e.:  
\begin{eqnarray}
\|\bm{z}^{t+1*}-\bm{z}^{t*}\| \leq \Delta_1,\ \forall t.\label{eq:delta1}
\end{eqnarray}
\item[\textit{A2.}] The discrepancy between the linearized power flow model and the original nonlinear power flow model is bounded for any feasible $\bm{z}$. As a result, we have:
\begin{eqnarray}
\|\tilde{\bm{f}}^t(\bm{z})-\bm{f}^t(\bm{z})\|_2^2\leq \Delta_2, \ \forall t.\label{eq:delta2}
\end{eqnarray}
\end{enumerate}
Additionally, the following results follow from the problem formulation (Lemma~1--2 in \cite{zhou2019accelerated}):
\begin{enumerate}
\item[\textit{B1.}] (Strongly monotone operator) There exists some constant $M>0$ such that for any feasible $\bm{z},\bm{z}'$ one has:
\begin{eqnarray}
(\bm{f}^t(\bm{z})-\bm{f}(\bm{z}'))^{\top}(\bm{z}-\bm{z}') \geq M\|\bm{z}-\bm{z}'\|_2^2.\label{eq:B1}
\end{eqnarray}
\item[\textit{B2.}] (Lipschitz continuity) There exists some constant $L>0$ such that for any feasible $\bm{z},\bm{z}'$ one has
\begin{eqnarray}
\|\bm{f}^t(\bm{z})-\bm{f}^t(\bm{z}')\|_2^2\leq L^2\|\bm{z}-\bm{z}'\|^2_2,\ \forall t.\label{eq:B2}
\end{eqnarray}
\item[\textit{B3.}] The following relation holds:
\begin{eqnarray}
M\leq L.\label{eq:B3}
\end{eqnarray}
\end{enumerate}

We are ready to conclude the following convergence results for \eqref{eq:mapping_nl}, i.e., gradient algorithm with nonlinear power flow as feedback.
\begin{theorem}[Convergence]\label{the:converge}
Given constant stepsize $\epsilon$ chosen to be: 
\begin{eqnarray}
0<\epsilon<2M/L^2, \label{eq:epsilon}
\end{eqnarray}
dynamics~\eqref{eq:mapping_nl} converges as:
\begin{eqnarray}
\lim_{t\rightarrow\infty}\sup\|\bm{z}^{t+1}-\bm{z}^{t+1*}\|^2_2 = \frac{\Delta_1+\epsilon^2\Delta_2}{2\epsilon M-\epsilon^2L^2}. \label{eq:converge}
\end{eqnarray}
\end{theorem}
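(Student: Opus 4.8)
The plan is to read the feedback dynamics \eqref{eq:mapping_nl} as an \emph{inexact} projected-gradient iteration chasing the moving minimizer $\bm{z}^{t*}$, and to reduce the analysis to a scalar recursion on the squared tracking error $a_t:=\|\bm{z}^{t}-\bm{z}^{t*}\|_2^2$. The first observation I would record is that, because $\bm{z}^{t*}$ solves the convex program \eqref{eq:set} at time $t$, it is a fixed point of the \emph{exact} (linear-model) projected-gradient map, i.e.\ $\bm{z}^{t*}=[\bm{z}^{t*}-\epsilon\bm{f}^t(\bm{z}^{t*})]_{\Omega}$. This gives a clean reference point against which to compare the iterate $\bm{z}^{t+1}=[\bm{z}^{t}-\epsilon\tilde{\bm{f}}^t(\bm{z}^{t})]_{\Omega}$ produced by the nonlinear feedback.

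The core step is the contraction estimate. Using the non-expansiveness of the Euclidean projection onto the convex set $\Omega$, and inserting and subtracting $\epsilon\bm{f}^t(\bm{z}^t)$ so as to isolate the modeling error $\bm{e}^t:=\tilde{\bm{f}}^t(\bm{z}^t)-\bm{f}^t(\bm{z}^t)$, I would bound $\|\bm{z}^{t+1}-\bm{z}^{t*}\|$ by $\|(\bm{z}^t-\bm{z}^{t*})-\epsilon(\bm{f}^t(\bm{z}^t)-\bm{f}^t(\bm{z}^{t*}))\|+\epsilon\|\bm{e}^t\|$. Expanding the square of the first bracket and applying strong monotonicity \textit{B1} to the cross term and Lipschitz continuity \textit{B2} to the quadratic term yields
\[
\big\|(\bm{z}^t-\bm{z}^{t*})-\epsilon(\bm{f}^t(\bm{z}^t)-\bm{f}^t(\bm{z}^{t*}))\big\|_2^2 \le \rho^2\,\|\bm{z}^t-\bm{z}^{t*}\|_2^2,\quad \rho^2:=1-2\epsilon M+\epsilon^2 L^2.
\]
The stepsize window \eqref{eq:epsilon} gives $2\epsilon M-\epsilon^2L^2>0$, hence $\rho^2<1$, while \textit{B3} ($M\le L$) makes the discriminant $4M^2-4L^2$ nonpositive and so keeps $\rho^2\ge 0$; thus $\rho^2\in[0,1)$ is a genuine contraction factor and $1-\rho^2=2\epsilon M-\epsilon^2L^2$ already reproduces the denominator of \eqref{eq:converge}.

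It remains to fold in the two perturbations. The modeling error enters through $\epsilon\|\bm{e}^t\|$, controlled by $\epsilon^2\|\bm{e}^t\|_2^2\le\epsilon^2\Delta_2$ via \textit{A2}; the optimum drift enters through the triangle inequality $\|\bm{z}^{t+1}-\bm{z}^{t+1*}\|\le\|\bm{z}^{t+1}-\bm{z}^{t*}\|+\|\bm{z}^{t*}-\bm{z}^{t+1*}\|$, whose last term is at most $\Delta_1$ by \textit{A1}. Assembling these into a recursion of the form $a_{t+1}\le\rho^2 a_t+(\Delta_1+\epsilon^2\Delta_2)$ and iterating, the transient $\rho^{2t}a_0$ vanishes as $t\rightarrow\infty$ because $\rho^2<1$, and the remaining geometric series sums to the fixed point $(\Delta_1+\epsilon^2\Delta_2)/(1-\rho^2)$, which is precisely the right-hand side of \eqref{eq:converge}; taking $\limsup$ then delivers the claim.

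The hard part will be the bookkeeping in this last step: squaring the triangle inequality and expanding the inner product against $\bm{e}^t$ both generate cross terms (between the drift $\bm{z}^{t*}-\bm{z}^{t+1*}$ and the contracted error, and between $(\bm{z}^t-\bm{z}^{t*})-\epsilon(\bm{f}^t(\bm{z}^t)-\bm{f}^t(\bm{z}^{t*}))$ and $\bm{e}^t$) that do not vanish automatically. To land \emph{exactly} on the additive constant $\Delta_1+\epsilon^2\Delta_2$ with the contraction coefficient left untouched at $\rho^2$, these cross terms must be absorbed carefully---e.g.\ by Young's inequality and by carrying the estimate at the level of the squared error rather than the norm, so that \textit{A1} contributes at the squared level and the bound collapses to the stated form. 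Verifying that this combination is tight enough to match \eqref{eq:converge}, rather than producing a looser constant such as $(\Delta_1+\epsilon\sqrt{\Delta_2})^2/(1-\rho)^2$, is the delicate point of the argument.
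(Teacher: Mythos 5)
Your proposal is correct and follows essentially the same route as the paper's proof: the fixed-point property $\bm{z}^{t*}=[\bm{z}^{t*}-\epsilon\bm{f}^t(\bm{z}^{t*})]_{\Omega}$, nonexpansiveness of the projection, strong monotonicity (\textit{B1}) plus Lipschitz continuity (\textit{B2}) to produce the contraction factor $1-2\epsilon M+\epsilon^2L^2$, and summation of a geometric series to reach $(\Delta_1+\epsilon^2\Delta_2)/(2\epsilon M-\epsilon^2L^2)$. One point worth knowing: the ``delicate point'' you flag is genuine, and the paper resolves it only by silently dropping both cross terms (the one against the optimizer drift $\bm{z}^{t+1*}-\bm{z}^{t*}$ and the one against the modeling error $\tilde{\bm{f}}^t(\bm{z}^t)-\bm{f}^t(\bm{z}^t)$), i.e., it effectively invokes $\|\bm{a}+\bm{b}\|_2^2\le\|\bm{a}\|_2^2+\|\bm{b}\|_2^2$, which is not a valid inequality in general, and it also applies \textit{A1} at the squared-norm level although \eqref{eq:delta1} bounds the norm itself --- so your instinct that a fully rigorous derivation needs Young's inequality (at the price of a looser constant or an inflated contraction factor) or a norm-level recursion is exactly right, and the stated equality in \eqref{eq:converge} should more properly be an upper bound.
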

\begin{proof}
We characterize the 2-norm of the distance between the system states $\bm{z}$ and the optimal value $\bm{z}^*$ at time $t+1$ as follows:
\begin{eqnarray}
&&\|\bm{z}^{t+1}-\bm{z}^{t+1*}\|^2_2\nonumber\\
&=&\|\bm{z}^{t+1}-\bm{z}^{t*}-(\bm{z}^{t+1*}-\bm{z}^{t*})\|^2_2\nonumber\\
&=&\|[\bm{z}^t-\epsilon \tilde{\bm{f}}^t(\bm{z}^t)]_{\Omega}-[\bm{z}^{t*}-\epsilon \bm{f}^t(\bm{z}^{t*})]_{\Omega}\nonumber\\
&&\hspace{2mm}-(\bm{z}^{t+1*}-\bm{z}^{t*})\|^2_2\nonumber\\
&\leq&\|\bm{z}^t-\epsilon \tilde{\bm{f}}(\bm{z}^t)-\bm{z}^{t*}+\epsilon \bm{f}^t(\bm{z}^{t*})\|_2^2+\|\bm{z}^{t+1*}-\bm{z}^{t*}\|^2_2\nonumber\\
&\leq&\|\bm{z}^t-\epsilon \bm{f}^t(\bm{z}^t)-\bm{z}^{t*}+\epsilon \bm{f}^t(\bm{z}^{t*})\|_2^2+\|\bm{z}^{t+1*}-\bm{z}^{t*}\|^2_2\nonumber\\
&&\hspace{2mm}+\epsilon^2\|\tilde{\bm{f}}^t(\bm{z}^t)-\bm{f}^t(\bm{z}^t)\|^2_2\nonumber\\
&\leq&\|\bm{z}^t-\epsilon \bm{f}^t(\bm{z}^t)-\bm{z}^{t*}+\epsilon \bm{f}^t(\bm{z}^{t*})\|_2^2+\Delta_1+\epsilon^2\Delta_2\nonumber\\
&=&\|\bm{z}^t-\bm{z}^{t*}\|_2^2+\|\epsilon \bm{f}^t(\bm{z}^t)-\epsilon \bm{f}^t(\bm{z}^{t*})\|_2^2\nonumber\\
&&\hspace{2mm}-2\epsilon(\bm{z}^t-\bm{z}^{t*})^{\top}(\bm{f}^t(\bm{z}^t)-\bm{f}^t(\bm{z}^{t*})+\Delta_1+\epsilon^2\Delta_2\nonumber\\
&\leq&(1+\epsilon^2L^2-2\epsilon M)\|\bm{z}^t-\bm{z}^{t*}\|_2^2+\Delta_1+\epsilon^2\Delta_2\nonumber\\
&\leq& (1+\epsilon^2L^2-2\epsilon M)^t\|\bm{z}^1-\bm{z}^{1*}\|_2^2\nonumber\\
&&\hspace{2mm}+(\Delta_1+\epsilon^2\Delta_2)\frac{1-(1+\epsilon^2L^2-2\epsilon M)^t}{2\epsilon M-\epsilon^2L^2}\nonumber
\end{eqnarray}
where the first inequality comes from the nonexpansiveness of the projection operation and triangular inequality of the norm, the third from \eqref{eq:delta1}--\eqref{eq:delta2}, the forth from \eqref{eq:B1}--\eqref{eq:B2}, and the last is obtained by repeating previous steps for $t$ times.

Under \eqref{eq:B3}--\eqref{eq:epsilon}, we have $0<1+\epsilon^2L^2-2\epsilon M<1$. Therefore, \eqref{eq:converge} follows.
\end{proof}

Theorem~\ref{the:converge} shows that the online gradient algorithm with nonlinear power flow as feedback tracks the time-varying optimal estimation results within bounded distance. Note that when $\Delta_1 = \Delta_2 = 0$, \eqref{eq:mapping_nl} is equivalent to dynamics \eqref{eq:grad}, and when $\Delta_1 = 0$, \eqref{eq:mapping_nl} is equivalent to dynamics \eqref{eq:grad} with voltage feedback from nonlinear power flow. We immediately come to the following convergence corollary for dynamic \eqref{eq:grad}.
\begin{corollary}\label{the:feedback}
	Given \eqref{eq:epsilon}, dynamics \eqref{eq:grad} converges as:
	\begin{eqnarray}
	\lim_{s\rightarrow\infty}\|\bm{z}(s)-\bm{z}^{*}\|^2_2 = 0, \label{eq:converge_c1}
	\end{eqnarray} 
	and dynamics \eqref{eq:grad} with voltage feedback from nonlinear power flow converges as
	\begin{eqnarray}
	\lim_{s\rightarrow\infty}\sup\|\bm{z}(s)-\bm{z}^{*}\|^2_2 = \frac{\Delta_2}{2M/\epsilon-L^2}. \label{eq:converge_c2}
	\end{eqnarray}
\end{corollary}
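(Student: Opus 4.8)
The plan is to reduce everything to a single affine (one-step contraction) recursion for the tracking error $a_t := \|\bm{z}^t - \bm{z}^{t*}\|_2^2$ and then unroll it as a geometric series. First I would exploit the optimality of $\bm{z}^{t*}$: since it minimizes the linearized cost $C^t$ over the convex compact set $\Omega$, the first-order optimality condition says $\bm{z}^{t*}$ is a fixed point of the projected-gradient map built from $\bm{f}^t$, i.e.\ $\bm{z}^{t*} = [\bm{z}^{t*} - \epsilon \bm{f}^t(\bm{z}^{t*})]_{\Omega}$ for any $\epsilon>0$. This lets me rewrite $\bm{z}^{t+1} - \bm{z}^{t*}$ as the difference of the two projected-gradient images evaluated at $\bm{z}^t$ and $\bm{z}^{t*}$, which is what makes the contraction machinery applicable.

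Next I would decompose the target $\|\bm{z}^{t+1} - \bm{z}^{t+1*}\|_2^2$ by inserting $\pm\,\bm{z}^{t*}$, isolating the optimizer drift $\bm{z}^{t+1*} - \bm{z}^{t*}$, which A1 (\eqref{eq:delta1}) bounds by $\Delta_1$. Applying nonexpansiveness of the Euclidean projection onto $\Omega$ removes the operator $[\cdot]_{\Omega}$, leaving a term in the unprojected gradient iterates. I would then insert $\pm\,\epsilon \bm{f}^t(\bm{z}^t)$ to separate the true linearized gradient from the nonlinear-feedback discrepancy $\tilde{\bm{f}}^t(\bm{z}^t) - \bm{f}^t(\bm{z}^t)$, whose squared norm A2 (\eqref{eq:delta2}) bounds by $\Delta_2$, with the $\epsilon^2$ factor arising because this mismatch is scaled by the stepsize.

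The core estimate is the expansion of $\|(\bm{z}^t - \bm{z}^{t*}) - \epsilon(\bm{f}^t(\bm{z}^t) - \bm{f}^t(\bm{z}^{t*}))\|_2^2$ into its three standard pieces: $\|\bm{z}^t - \bm{z}^{t*}\|_2^2$, the quadratic gradient term $\epsilon^2\|\bm{f}^t(\bm{z}^t) - \bm{f}^t(\bm{z}^{t*})\|_2^2 \le \epsilon^2 L^2\|\bm{z}^t - \bm{z}^{t*}\|_2^2$ via Lipschitz continuity B2 (\eqref{eq:B2}), and the cross term $-2\epsilon(\bm{z}^t - \bm{z}^{t*})^{\top}(\bm{f}^t(\bm{z}^t) - \bm{f}^t(\bm{z}^{t*})) \le -2\epsilon M\|\bm{z}^t - \bm{z}^{t*}\|_2^2$ via strong monotonicity B1 (\eqref{eq:B1}). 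Collecting these yields the contraction factor $\rho := 1 + \epsilon^2 L^2 - 2\epsilon M$, so that $a_{t+1} \le \rho\, a_t + \Delta_1 + \epsilon^2\Delta_2$. Under the stepsize rule $0 < \epsilon < 2M/L^2$ (\eqref{eq:epsilon}) together with B3 ($M \le L$, \eqref{eq:B3}), one checks $\rho \in (0,1)$; unrolling the affine recursion gives $a_{t+1} \le \rho^t a_1 + (\Delta_1 + \epsilon^2\Delta_2)\tfrac{1-\rho^t}{1-\rho}$, and letting $t\to\infty$ with $1-\rho = 2\epsilon M - \epsilon^2 L^2$ produces \eqref{eq:converge}.

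The main obstacle is combining the three distinct error sources cleanly. Separating the optimizer drift $\bm{z}^{t+1*} - \bm{z}^{t*}$ so that it enters additively (as $\Delta_1$) rather than coupling multiplicatively with the contraction requires care in how the triangle/nonexpansiveness steps are ordered and in which cross terms between the drift, the linearization mismatch, and the contraction are discarded. One must also verify that the stepsize window $0 < \epsilon < 2M/L^2$ is precisely what keeps $\rho<1$ while the accumulated perturbation remains bounded. Everything downstream of the contraction inequality is a routine geometric-series limit, and the two corollaries (\eqref{eq:converge_c1}, \eqref{eq:converge_c2}) then follow immediately by setting $\Delta_1=\Delta_2=0$ or $\Delta_1=0$.
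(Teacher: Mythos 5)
Your proposal reconstructs the paper's proof of Theorem~\ref{the:converge} essentially verbatim---the fixed-point property $\bm{z}^{t*}=[\bm{z}^{t*}-\epsilon\bm{f}^t(\bm{z}^{t*})]_{\Omega}$, nonexpansiveness of the projection, the strong-monotonicity/Lipschitz expansion yielding the contraction factor $1+\epsilon^2L^2-2\epsilon M$, and the geometric-series unrolling---and then derives the corollary exactly as the paper does, by specializing the bound \eqref{eq:converge} with $\Delta_1=\Delta_2=0$ for \eqref{eq:converge_c1} and $\Delta_1=0$ for \eqref{eq:converge_c2}. This is the same approach as the paper (including the same treatment of the discarded cross terms after the triangle-inequality and nonexpansiveness steps), so the proposal is correct in the same sense that the paper's own argument is.
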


The result of \eqref{eq:converge_c2} also indicates that one can achieve an arbitrarily small distance from $\bm{z}$ to $\bm{z}^*$ by appropriately choosing stepsize $\epsilon$. For details, see Section~IV-C in \cite{zhou2019accelerated} to avoid repetition.

\end{document}